\documentclass[12pt]{amsart}

\usepackage{amsfonts,color}
\usepackage{latexsym,amssymb}
\usepackage{amsmath}
\usepackage[utf8]{inputenc}
\usepackage{dsfont,comment,cases}
\usepackage{empheq,enumitem}     

\newtheorem{theorem}{Theorem}[section]

\newtheorem{corollary}[theorem]{Corollary}

\newtheorem{remark}[theorem]{Remark}

\theoremstyle{definition}
\newtheorem{example}[theorem]{Example}
\theoremstyle{plain} 

\usepackage{soul,xcolor} 

\usepackage[bookmarksnumbered,colorlinks, linkcolor=blue, citecolor=red, pagebackref, bookmarks, breaklinks]{hyperref}

\usepackage[hyperpageref]{backref}

\newcommand{\dint}{\int_{\Omega}}

\catcode`@=11 \@addtoreset{equation}{section} \catcode`@=12

\usepackage[left=2.2cm,right=2.2cm,top=2.6cm,bottom=2.4cm]{geometry}

\begin{document}	
\title[Weighted Hardy-Sobolev type inequalities]{Weighted Hardy-Sobolev type 
inequalities with boundary terms
}
	
	\author[Do o]{J. M. do Ó}
	\address{Universidade Federal da Para\'iba, Departamento de Matem\'atica
 \newline\indent
 58051-900 Jo\~{a}o Pessoa-PB, Brazil}
	\email{\href{mailto:jmbo@mat.ufpb.br}{jmbo@mat.ufpb.br}}
	
	\author[Furtado]{M.F. Furtado}
	\address{Universidade de Bras\'ilia, Departamento de Matem\'atica
 \newline\indent
 70910-900 Bras\'ilia-DF, Brazil}
	\email{\href{mailto:mfurtado@unb.br}{mfurtado@unb.br}}

\author[Medeiros]{E.S. Medeiros}
\address{Universidade Federal da Para\'iba, Departamento de Matem\'atica
\newline\indent
58051-900 Jo\~{a}o Pessoa-PB, Brazil}
\email{\href{mailto: everaldo@mat.ufpb.br}{everaldo@mat.ufpb.br}}

\author[J. Ratzkin]{J. Ratzkin}
\address[J. Ratzkin]{Department of Mathematics,
	Universit\"{a}t W\"{u}rzburg
	\newline\indent
	97070, W\"{u}rzburg-BA, Germany}
\email{\href{mailto:jesse.ratzkin@uni-wuerzburg.de}{jesse.ratzkin@uni-wuerzburg.de}}

	\subjclass{35J55, 35J50, 37K05, 35J62}
	\keywords{Weighted Sobolev embedding, Hardy-type inequality, Trudinger-Moser inequality.  }
		
	\begin{abstract}
		In this paper, we establish a new class of weighted Hardy–Sobolev type inequalities 
        under mild monotonicity assumptions on the weight function. As a consequence, we 
        derive the corresponding weighted Sobolev and trace-type inequalities. These 
        results play an important role in the analysis of elliptic problems with Neumann 
        or Robin boundary conditions in unbounded domains.
    \end{abstract}

	\maketitle
	
	\begin{center}
		\begin{minipage}{8cm}
		\footnotesize
		\tableofcontents
		\end{minipage}
	\end{center}

\section{Introduction and Statement of Results} 

Functional inequalities play a critical role in modern analysis and 
its applications, with the famous inequalities of Hardy and of Sobolev serving 
as models. The classical Sobolev inequality in 
$\mathbb{R}^N$ takes the form 
\begin{equation}\label{Sobolev-Classical} 
\mathcal{S}_{N,p} 
\left (\int_{\mathbb{R}^N} |u|^{\frac{Np}{N-p}} dx \right )^{\frac{N-p}{N}} \leq \int_{\mathbb{R}^N} |\nabla u|^p dx, 
\quad \forall \, u \in C_0^\infty (\mathbb{R}^N),
\end{equation} 
where $\mathcal{S}_{N,p}>0$ is a positive constant and $1\leq p <N$. In \cite{Hardy1}, G. H. Hardy 
proved that  
$$
\left(\frac{p-1}{p}\right)^p \int_0^\infty \frac{|u|^p}{|t|^p} dt\leq \int_0^{\infty} |u'|^p
dt, \quad \forall \,u\in C_0^\infty (0,+\infty),
$$
which one can then extend to 
\begin{equation}\label{Hardy-Classical}
		\left(\frac{p-1}{p}\right)^p\int_{\mathbb{R}^N_+} \frac{|u|^p}{x_N^{p}}dx
        \leq\int_{\mathbb{R}^N_+} |\nabla u|^pdx,\quad \forall \,u\in C_0^\infty (\mathbb{R}^N_+), \, p>1,
	\end{equation}
in the half-space $\mathbb{R}^N_+ = \{ x= (x', x_N) \in \mathbb{R}^N: x_N 
\geq 0\}$. In their classical form, Hardy inequalities express the fact that 
certain weights can be controlled by the gradient term, revealing a delicate balance between 
integrability and singularity. It is worth remarking here that \eqref{Hardy-Classical}
holds only when the function $u$ vanishes along the hyperplane $\{ x_N = 0\}$; 
see Remark~\ref{Remak-Singular} for further commentary. 

In the book \cite{Mazya}, the author combines the Sobolev embedding with Hardy inequality to 
obtain a single statement. Precisely, it was shown that, for those smooth functions 
on the upper half-space
$\mathbb{R}^N_+$ that vanish on the boundary $x_N =0$,  there holds the sharp Hardy-Sobolev-Maz’ya inequality:
\begin{equation}\label{Mazya}
\left(
\int_{\mathbb{R}^N_+} 
|u|^{\frac{2N}{N-2}} \, dx
\right)^{\!\frac{N-2}{N}}
\leq 
C \int_{\mathbb{R}^N_+} 
\left( 
|\nabla u|^2 - \frac{|u|^2}{4x_N^2}
\right) \, dx,
\qquad \forall\,
u \in C_0^\infty(\mathbb{R}^N_+).
\end{equation}
This inequality, its generalizations to different powers of the gradient, and the 
study of optimal 
constants have attracted considerable attention in recent years.  
Further extensions of the Hardy–Sobolev–Maz’ya inequality~\eqref{Mazya} to convex domains have 
also been investigated; see, for instance,~\cite{Filippas-Terticlas1,  Frank-Loss} 
and the references therein. 

These inequalities have been widely used to study existence and Liouville-type results. For 
instance, in~\cite{zbMATH05077864,Athanasios}, the authors, motivated by the 
study of Liouville-type results, 
considered the problem of determining the optimal constant in this inequality.

In recent years, several extensions and refinements of Hardy’s inequality have 
been established, 
including weighted and anisotropic versions, inequalities with remainder terms, and 
Hardy–Sobolev-type inequalities involving additional effects of integrability. Such weighted 
inequalities have found important applications in the study of elliptic and parabolic equations 
with singular coefficients, in potential theory, and in spectral problems associated with 
degenerate operators. For \( u \in C_0^\infty(\Omega) \) and \( 1 < p < N \), a weighted 
Hardy-Sobolev inequality states that
\begin{equation}\label{Geral}
\int_{\Omega} W_1 |u|^p \, dx \leq C \int_{\Omega} W_2 |\nabla u|^p \, dx,
\end{equation}
where \( \Omega \subset \mathbb{R}^N \) (\( N \ge 3 \)) and the weights $W_1$ and 
$W_2$ satisfy appropriate conditions; see, for example,~\cite{Brezis1, Opic} and 
references therein. To give one further example, we cite 
\cite[Theorem 6.9]{Ambrosio-Dipierro}, where the authors proved that 
\begin{equation}\label{Serinha}
		\left|\frac{\gamma-p+1}{p}\right|^p\int_{\mathbb{R}^N_+} \frac{|u|^p}{x_N^{p-\gamma}}dx
        \leq\int_{\mathbb{R}^N_+} x_N^\gamma|\nabla u|^pdx,\quad \forall \,u\in C_0^\infty (\mathbb{R}^N_+),
	\end{equation}
holds for any $p>1$ and $\gamma\in \mathbb{R}$. Setting $\gamma=0$ we 
recover \eqref{Hardy-Classical}.

Our goal in the present manuscript is to prove weighted Hardy-Sobolev 
inequalities in the domain above a graph in $\mathbb{R}^N$, both for 
increasing and for decreasing weights. We also explore some, but not all, 
of the consequences of these weighted inequalities. Our results here include 
extending \eqref{Serinha} to functions that do not vanish 
along $\{ x_N = 0\}$ provided $\gamma>p-1$ (see Corollary \ref{Singular-1}); 
additionally we provide a counter-example in Remark~\ref{Remak-Singular}
demonstrating that the 
condition $\gamma > p-1$ is sharp. 
The present work includes that 
of \cite{Abreu-Felix-Medeiros,Abreu-Furtado-Medeiros}, as well as other works, 
as special cases. Furthermore, we provide examples showing our conditions on 
the weight functions cannot be weakened, and so our results are sharp. 
We also weaken the 
regularity hypotheses of, for instance, \cite{Lewis}. It worth mentioning that, 
while the weight 
should vanish on the boundary to mimic the classical Hardy inequality, our inequalities 
apply to functions that do not themselves vanish on the boundary, in contrast 
to, for instance, \cite{Cossetti-DArca}.

Let $N \geq 2$ and $p > 1$. As 
in  \cite{Berestycki-Caffarelli-Nirenberg1, Berestycki-Caffarelli-Nirenberg2}, we consider a 
continuous function $\psi:\mathbb{R}^{N-1} \to \mathbb{R}$ and set
 	$$
	\Omega:=\left\{x=(x', x_N) \in \mathbb{R}^{N} : 
    x'\in \mathbb{R}^{N-1}, \;  x_N>\psi(x')\right\}.
	$$
We recover the half-space $\mathbb{R}^N_+$ by setting $\psi\equiv 0$. 

Throughout the paper, we suppose that $W$ and its weak derivative with respect to $x_N$ 
satisfy the basic assumption 
\begin{equation} \label{W_0} \tag{$W_0$} W:\overline{\Omega} \rightarrow [0,\infty), 
\quad W \in L^1_{\mathrm{loc}}(\Omega),\quad W_{x_N} := \frac{\partial W}{\partial x_N}  
\in L^1_{\mathrm{loc}}(\Omega). \end{equation}

We present our results in several cases, according to the monotonicity conditions on 
the weight function $W$.

\subsection{Weighted Hardy inequality and consequences: increasing weight case}

In our initial results, we focus on weights that are increasing in the $x_N$-direction. 
We impose the following assumption:
\begin{equation}\label{W_1^+}\tag{$W_1^+$}
\frac{W^p}{[W_{x_N}]^{p-1}} \in L^1_{\mathrm{loc}}(\Omega) ,\quad W_{x_N} > 0 
\text{ a.e. in } \Omega.\end{equation} 
Under these conditions, we establish the following result.

\begin{theorem}[Hardy inequality of type I] 
\label{Hardy-Grafico}
Suppose \eqref{W_0} and \eqref{W_1^+}. If $p>1$ then,  for any $u \in C_0^\infty(\mathbb{R}^N)$, the following inequality is satisfied:
\begin{equation}\label{Hardy-Geral-Grafico}
\int_{\Omega} W_{x_N} |u|^p \, dx 
+ p \int_{\mathbb{R}^{N-1}} W(x',\psi(x')) |u(x',\psi(x'))|^p \, dx' 
\leq p^p \int_{\Omega} \frac{W^p}{[W_{x_N}]^{p-1}} |\nabla u|^p \, dx.
\end{equation}
\end{theorem}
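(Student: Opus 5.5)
The plan is a one-dimensional integration by parts along vertical lines, then Hölder's and Young's inequalities. Fix $x'\in\mathbb{R}^{N-1}$ and consider $t\mapsto W(x',t)|u(x',t)|^p$ on $(\psi(x'),\infty)$. Since $u$ has compact support, this function vanishes for large $t$. By \eqref{W_0}, for a.e.\ $x'$ the map $t\mapsto W(x',t)$ is locally absolutely continuous on $(\psi(x'),\infty)$; this is the ACL characterization of functions with $L^1_{\mathrm{loc}}$ weak derivative in $x_N$. Also $|u|^p\in C^1$ because $p>1$, with $\partial_{x_N}|u|^p=p|u|^{p-2}u\,u_{x_N}$. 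Integrating $\partial_t(W|u|^p)$ from $\psi(x')$ to $\infty$ therefore gives
\begin{equation*}
W(x',\psi(x'))|u(x',\psi(x'))|^p+\int_{\psi(x')}^\infty W_{x_N}|u|^p\,dt=-p\int_{\psi(x')}^\infty W|u|^{p-2}u\,u_{x_N}\,dt .
\end{equation*}
To make this rigorous, I will first integrate on $(\psi(x')+\varepsilon,\infty)$ and let $\varepsilon\to0$. The boundary term should be read as the limit (or liminf) of $W(x',\psi(x')+\varepsilon)|u|^p$. Using that $W$ is defined and continuous up to $\overline\Omega$, or at least the lower semicontinuity implicit in the statement, identifies the limit with $W(x',\psi(x'))$. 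Since all other terms are nonnegative, Fatou's lemma suffices for the inequality direction. Integrating over $x'$ with Fubini then yields
\begin{equation*}
\int_{\mathbb{R}^{N-1}} W(x',\psi)|u(x',\psi)|^p\,dx'+\int_\Omega W_{x_N}|u|^p\,dx\le p\int_\Omega W|u|^{p-1}|\nabla u|\,dx .
\end{equation*}

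Next I bound the right-hand side by writing $W|u|^{p-1}|\nabla u|=\big(W_{x_N}^{(p-1)/p}|u|^{p-1}\big)\big(W\,W_{x_N}^{-(p-1)/p}|\nabla u|\big)$, which uses $W_{x_N}>0$ a.e. Hölder's inequality with exponents $p/(p-1)$ and $p$ gives $p A^{(p-1)/p}B^{1/p}$. Here $A=\int_\Omega W_{x_N}|u|^p$, and $B=\int_\Omega \frac{W^p}{W_{x_N}^{p-1}}|\nabla u|^p$ is finite by \eqref{W_1^+} and the compact support of $u$. Write $T$ for the boundary term.

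To recover the constant $p^p$ together with the factor $p$ in front of the boundary term, I do not simply absorb $A$. The inequality reads $T+A\le pA^{(p-1)/p}B^{1/p}$. Young's inequality $ab\le \frac{p-1}{p}a^{p/(p-1)}+\frac1p b^p$, applied with $a=A^{(p-1)/p}$ and $b=pB^{1/p}$, gives $pA^{(p-1)/p}B^{1/p}\le \frac{p-1}{p}A+p^{p-1}B$. Hence $T+\frac1p A\le p^{p-1}B$. Multiplying by $p$ gives exactly $pT+A\le p^pB$, which is \eqref{Hardy-Geral-Grafico}. This neatly explains the factor $p$ on the trace term.

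I expect the main obstacle to be the regularity step: justifying the fundamental theorem of calculus along a.e.\ vertical line under only \eqref{W_0}, and identifying the boundary trace with $W(x',\psi(x'))$. A further subtlety is that $A$ might a priori be infinite near the boundary, which would break the Hölder bound. I would handle this by working first on $\{x_N>\psi(x')+\varepsilon\}$, where $A_\varepsilon<\infty$ follows from local integrability. I would derive the final inequality there with boundary value $W(x',\psi+\varepsilon)$, and then pass $\varepsilon\to0$ using monotone convergence for $A$ and Fatou's lemma for the trace term.
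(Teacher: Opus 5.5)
Your proposal is correct and is essentially the paper's proof. Both arguments integrate $\partial_{x_N}(W|u|^p)$ along vertical lines and apply Fubini to get $T+A\le p\int_\Omega W|u|^{p-1}|\nabla u|\,dx$, then split with $W_{x_N}^{(p-1)/p}$ and use Young's inequality to reach $\frac1p A+T\le p^{p-1}B$. The paper applies Young pointwise with $\varepsilon=(p-1)/p$, while you apply H\"older followed by scalar Young, which is equivalent. Your $\varepsilon$-shift to $\{x_N>\psi(x')+\varepsilon\}$, with monotone convergence and Fatou, supplies rigor the paper skips: finiteness of $A$ before subtracting, and matching $W(x',\psi(x'))$ with $\lim_{t\downarrow\psi(x')}W(x',t)$. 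One small correction: $B<\infty$ does not follow from $L^1_{\mathrm{loc}}(\Omega)$ up to the boundary, but this is harmless since the inequality is trivial when $B=\infty$ and you only use $B_\varepsilon\le B$.
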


The key distinction of the domains in Theorem~\ref{Hardy-Grafico} is that their 
boundaries are merely continuous, unlike the sufficiently smooth boundaries required in 
earlier work (e.g., \cite[Lemma~2]{Lewis}). This lack of smoothness means the outward 
normal vector is often undefined (e.g., for Weierstrass-type function graphs), 
rendering Green's First Identity and the Divergence Theorem inapplicable. We can also 
prove Theorem \ref{Hardy-Grafico} for more general domains, including non-convex sets 
(see Corollary \ref{fome1}). 

Following Theorem~\ref{Hardy-Grafico}, we prove weighted Sobolev embeddings for both subcritical (Theorem~\ref{Sobolev}) and critical (Theorem~\ref{Sobolev p=N}) cases, in addition to a trace embedding theorem (Theorem~\ref{trace-positive}).

 \subsection{Weighted Hardy inequality and consequences: decreasing weight case}

Next, we shall focus on the case of decreasing weights. In addition to $(W_0)$,
we further assume that:
\begin{equation}\label{W_1^-}\tag{$W_1^-$}
\frac{W^p}{[-W_{x_N}]^{p-1}} \in L^1_{\mathrm{loc}}(\Omega) ,\quad W_{x_N} < 0 
\text{ a.e. in } \Omega.\end{equation} 

In this context, our Hardy-type result takes the following form: 
	\begin{theorem}[Hardy inequality of type II]\label{Hardy-Type II} Suppose  \eqref{W_0} and \eqref{W_1^-}. If $p>1$ then, for any $u \in C_0^\infty(\mathbb{R}^N)$, 
    the following inequality holds:
    \begin{equation}\label{Hardy-Geral-Grafico-II}
         \int_{\Omega} [-W_{x_N}]|u|^pdx \leq p^p\int_{\Omega} \frac{W^p}
         {\left[-W_{x_N}\right]^{p-1}}|\nabla u|^pdx +  p	\int_{\mathbb{R}^{N-1}} 
         W(x',\psi(x'))|u(x',\psi(x'))|^pdx'.
  \end{equation}
		\end{theorem}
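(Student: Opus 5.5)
The plan is to argue one vertical line at a time, exactly as one would for Theorem~\ref{Hardy-Grafico}, but with the sign of $W_{x_N}$ reversed. This avoids any use of a normal vector or the divergence theorem on the merely continuous boundary $\partial\Omega$. Fix $x'\in\mathbb{R}^{N-1}$ and write $a=\psi(x')$. Since $u\in C_0^\infty(\mathbb{R}^N)$, there is $R>a$ with $u(x',t)=0$ for $t\ge R$. By \eqref{W_0} and Fubini, for a.e.\ $x'$ the function $t\mapsto W(x',t)$ is locally absolutely continuous on $(a,\infty)$, with derivative $W_{x_N}(x',\cdot)$. We also need that it extends continuously to $t=a$ with value $W(x',\psi(x'))$; here we interpret the boundary trace of $W$ this way, as in the type I case.

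On such a line, set $v(t)=|u(x',t)|^p$, which is $C^1$ since $p>1$, and note $|v'|\le p|u|^{p-1}|\partial_{x_N}u|\le p|u|^{p-1}|\nabla u|$. Integrating $(Wv)'$ over $(a+\varepsilon,R)$ and letting $\varepsilon\to0$ gives
\begin{equation*}
\int_a^\infty [-W_{x_N}]\,v\,dt = W(x',a)\,v(a) + \int_a^\infty W v'\,dt .
\end{equation*}
Because $-W_{x_N}>0$, the left side is a nonnegative, possibly infinite, quantity. The passage $\varepsilon\to0$ is justified by monotone convergence on the left and by the continuity of $W$ up to $t=a$. The last integral is then finite, since $W|v'|$ is bounded by the right-hand side of the Hölder estimate below, and that estimate is finite by \eqref{W_1^-}.

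Next comes the Hölder step, which is where the weight $W^p/[-W_{x_N}]^{p-1}$ appears. Write
\begin{equation*}
W|u|^{p-1}|\nabla u| = \bigl([-W_{x_N}]^{\frac{p-1}{p}}|u|^{p-1}\bigr)\bigl(W[-W_{x_N}]^{-\frac{p-1}{p}}|\nabla u|\bigr)
\end{equation*}
and apply Hölder with exponents $p/(p-1)$ and $p$. Denote
\begin{equation*}
A=\int_a^\infty[-W_{x_N}]|u|^p\,dt,\qquad B=\int_a^\infty \frac{W^p}{[-W_{x_N}]^{p-1}}|\nabla u|^p\,dt,\qquad T=W(x',a)|u(x',a)|^p .
\end{equation*}
Then $A\le T+pA^{(p-1)/p}B^{1/p}$. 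It is better to work with the full $x'$-integrals, so that local integrability from \eqref{W_1^-} on the compact support guarantees finiteness. Integrating in $x'$ and keeping the same letters for the integrated quantities, Hölder in $x'$ gives $A\le T+pA^{(p-1)/p}B^{1/p}$ again.

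Young's inequality, $pA^{(p-1)/p}B^{1/p}\le (p-1)\lambda A+\lambda^{1-p}B$, does not give constants $p^p$ and $p$ cleanly. Instead, first assume $A<\infty$. Young with the choice producing $\frac{p-1}{p}A+p^{p-1}B$ yields $A\le T+\frac{p-1}{p}A+p^{p-1}B$, that is, $A\le pT+p^pB$. This is exactly \eqref{Hardy-Geral-Grafico-II}.

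The main obstacle is justifying $A<\infty$ and the boundary limit. Since $-W_{x_N}$ need not be integrable up to $\partial\Omega$, we cannot subtract $A$ directly. To handle this, I would run the whole argument on $(\psi(x')+\varepsilon,\infty)$, where everything is finite by \eqref{W_0} and \eqref{W_1^-} and the compact support of $u$. This gives $A_\varepsilon\le pT_\varepsilon+p^pB$, with $T_\varepsilon=\int W(x',\psi+\varepsilon)|u(x',\psi+\varepsilon)|^p\,dx'$. Then let $\varepsilon\to0$: the left side converges by monotone convergence, and $T_\varepsilon\to T$ by continuity of $W$ and $u$ up to the graph, with dominated convergence on the compact $x'$-support. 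If $W(\cdot,\psi(\cdot))$ is only lower semicontinuous, the monotonicity of $W$ in $x_N$ gives $T_\varepsilon\le T$ directly, since $W$ is decreasing. This is the point where the decreasing case is actually easier than type I.
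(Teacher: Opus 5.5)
Your proposal is correct and follows the paper's proof: you integrate along vertical lines using the fundamental theorem of calculus and Fubini, factor $W|u|^{p-1}|\nabla u|=\bigl([-W_{x_N}]^{(p-1)/p}|u|^{p-1}\bigr)\bigl(W[-W_{x_N}]^{-(p-1)/p}|\nabla u|\bigr)$, apply Young with weight $\frac{p-1}{p}$ on $\int_\Omega[-W_{x_N}]|u|^p\,dx$, and absorb. Your truncation to $\{x_N>\psi(x')+\varepsilon\}$ is extra care that the paper skips when it absorbs; however, the lower-semicontinuous aside ``$T_\varepsilon\le T$ directly'' is false as written, since monotonicity of $W$ does not control $u(x',\psi(x')+\varepsilon)$ against $u(x',\psi(x'))$, and a cleaner fix is to absorb line by line, because monotonicity gives $\int_{\psi(x')}^\infty[-W_{x_N}]|u|^p\,dx_N\le\|u\|_\infty^p\,W(x',\psi(x'))<\infty$ for each $x'$.
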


\begin{remark}
    One can modify the proof of Theorem~\ref{Hardy-Type II} to obtain 
    other interesting results. 
    For instance, if $W$ depends only on $x_N$ 
    and  $W_{x_Nx_N}<0$, then
	$$
	-\dint \Delta W|u|^pdx   \leq p^p\dint \frac{|\nabla W|^{p}}
    {|\Delta W|^{p-1}}|\nabla u|^pdx +p	\int_{\mathbb{R}^{N-1}} 
    W_{x_N}(x',\psi(x'))|u(x',\psi(x'))|^pdx'.
$$
In particular, if $W_{x_N}\leq0$, then 
$$
	\dint |\Delta W||u|^pdx
	\leq p^p\dint \frac{|\nabla w|^{p}}{|\Delta w|^{p-1}}|\nabla u|^pdx,
$$
which is in accordance with the result stated in \cite[Theorem 1]{Davies-Hinz} for bounded domains. 	
\end{remark}
        
\subsection{Comparison with previous results}

Inequalities with non-singular weight were considered in \cite{Ghoussoub-Moradifam}, where the 
notion of \emph{Bessel pairs} was introduced to derive a large class of weighted Hardy-type 
inequalities of the form~\eqref{Geral}. More specifically, for \( a, b > 0 \),
\[
\int_{\mathbb{R}^N} (a + b |x|^{\alpha})^{ - 2/\alpha+\beta} u^2 \, dx
\leq 
C \int_{\mathbb{R}^N} (a + b |x|^{\alpha})^{\beta} |\nabla u|^2 \, dx,\qquad \forall\, u \in C_0^\infty(\mathbb{R}^N),
\]
where the exponents \( \alpha, \beta \) satisfy mild assumptions depending on the dimension. Many 
others have extended this technique of using Bessel pairs to derive Hardy-type 
inequalities. 
For related results, see also \cite{MR2481073,Cossetti-DArca} and 
references therein.

We also mention the works~\cite{MR860920,Lewis,Pfluger}, where the authors proved a non-singular 
Hardy–Sobolev-type inequality involving a boundary trace term. Specifically, for all \( 1 < p < N \), 
there exist constants \( C_1, C_2 > 0 \) such that  
\[
\int_\Omega \frac{|u|^p}{(1 + |x|)^p} \, dx
\leq 
C_1 \int_\Omega |\nabla u|^p \, dx
+ 
C_2 \int_{\partial\Omega} \frac{|\nu \cdot x|}{(1 + |x|)^p} |u| \, dS_x,
\qquad \forall\, u \in C_0^\infty(\mathbb{R}^N),
\]
where \( \Omega \subset \mathbb{R}^N \) is an unbounded domain with a noncompact, smooth 
boundary \( \partial\Omega \), and \( \nu \) denotes the unit outward normal vector 
on \( \partial\Omega \). 
These inequalities make it possible to study elliptic boundary value problems on unbounded domains, 
even when the boundary itself is unbounded, using variational methods.

\subsection{Organization of the paper} 
The rest of the paper is organized as follows. Section~\ref{subsecao-exemplo-1} 
presents several examples that illustrate 
the consequences of our main results. In Section~\ref{sec:inc_weight_proofs}, we provide 
the proofs of the main results 
in the increasing case, whereas Section~\ref{sec:decreasing-case} is devoted to the 
decreasing case. Finally, Section~\ref{Final comments} 
contains some comments on applications of our results and further research projects.

\section{Some examples of weights} \label{subsecao-exemplo-1}
In this section, we discuss some consequences of our results by considering specific 
classes of potentials \(W\). Since \(\Omega\) is diffeomorphic to \(\mathbb{R}^N_+\), 
we restrict our attention to the case \(\Omega = \mathbb{R}^N_+\). Thus, we pick $\psi\equiv 0$ in Theorem \ref{Hardy-Grafico} to get
		\begin{equation}\label{timao-campeao}
			\int_{\mathbb{R}^N_+} W_{x_N}|u|^pdx+p	\int_{\mathbb{R}^{N-1}}  
            W(x',0)|u|^{p}dx'\leq p^p\int_{\mathbb{R}^{N}_+} 
            \frac{W^p}{[W_{x_N}]^{p-1}} |\nabla u|^pdx, \qquad \forall \, u \in C_0^\infty(\mathbb{R}^N).
		\end{equation}

\subsection{Increasing weights}

In what follows, we present examples of weights to which we can apply the last inequality above.

\begin{example} 
     If we consider  $\gamma>0$, $\beta<N-1$ and set
$$
W(x',x_N) := \frac{x_N^\gamma}{|x'|^\beta}, \qquad x'\in\mathbb{R}^{N-1},\,x_N>0,
$$
we have that
$$
W_{x_N}(x) = \frac{\gamma x_N^{\gamma-1}}{|x'|^\beta},\qquad \frac{W(x)^p}{[W_{x_N}(x)]^{p-1}} 
= \frac{1}{\gamma^{p-1}} \frac{x_N^{\gamma+p-1}}{|x'|^\beta}.
$$
Consequently, from \eqref{timao-campeao}, we derive the following inequality:
$$
\int_{\mathbb{R}^N_+}  \frac{1}{x_N^{1-\gamma}|x'|^\beta} |u|^p dx \leq 
\left( \frac{p}{\gamma}\right)^{p} \int_{\mathbb{R}^{N}_+} \frac{x_N^{\gamma+p-1}}{|x'|^\beta} 
|\nabla u|^pdx.
$$
If $0<\gamma<1$, the function on the left-hand side above can blow up along the 
entire hyperplane $\{x_N = 0\}$, in contrast to the usual Hardy inequalities, 
where the singularity occurs only at a point.
\end{example}

\begin{example}\label{ex21} 
    Picking $\gamma>0$, $\beta < N-1$ and setting
$$
W(x',x_N) := \frac{(1+x_N)^\gamma}{|x'|^\beta}, \qquad x'\in\mathbb{R}^{N-1},\,x_N>0,
$$
we have that
$$
W_{x_N}(x) = \frac{\gamma(1+x_N)^{\gamma-1}}{|x'|^\beta},\qquad \frac{W(x)^p}{[W_{x_N}(x)]^{p-1}} = \frac{1}{\gamma^{p-1}} \frac{(1+x_N)^{\gamma+p-1}}{|x'|^\beta}.
$$
A  direct application of   \eqref{timao-campeao} yields
$$
\int_{\mathbb{R}^{N-1}}  \frac{|u|^p}{|x'|^\beta} dx'\leq \left( \frac{p}{\gamma}\right)^{p-1} \int_{\mathbb{R}^{N}_+} \frac{(1+x_N)^{\gamma+p-1}}{|x'|^\beta} |\nabla u|^pdx
$$
after discarding the first term,
which is a Kato-type inequality; see, for instance, \cite{Davila-Dupaigne-Montenegro} for related results.

Moreover, if we consider $\beta=0$, and keep all the terms in~\eqref{timao-campeao}, 
we obtain
		$$
			\begin{aligned}
				\left(\frac{\gamma}{p}\right)^{p}\int_{\mathbb{R}^N_+} \frac{|u|^p}{(1+x_N)^{1-\gamma}}dx&+\left(\frac{\gamma}{p}\right)^{p-1}	\int_{\mathbb{R}^{N-1}} |u|^{p}dx'
				&\leq \int_{\mathbb{R}^{N}_+}(1+x_N)^{\gamma+p-1}|\nabla u|^pdx,
			\end{aligned}
		$$
    and therefore we have obtained a new proof of \cite[Theorem 1.1]{Abreu-Furtado-Medeiros}. 
\end{example}

 \begin{example}  
    If $W(x):=\log(\mathrm{e}+x_N)$, then 
		$$
		W_{x_N}(x)=\frac{1}{e+x_N},\qquad \frac{W(x)^p}{[W_{x_N}(x)]^{p-1}}=(e+x_N)^{p-1} \log^p(e+x_N)
        $$
		and therefore \eqref{timao-campeao} becomes
		\begin{align*}
			\int_{\mathbb{R}^N_+} \frac{|u|^p}{(e+x_N)}dx+ p \int_{\mathbb{R}^{N-1}} |u|^{p}dx'&\leq p^p \int_{\mathbb{R}^N_+} (e+x_N)^{p-1} \log^p(e+x_N)|\nabla u|^pdx,
		\end{align*}
		which is a counterpart of the last inequality obtained in Example \ref{ex21}.
\end{example}
    
 \begin{example}
      Let $\gamma>0$ and consider
		$W(x):=e^{\gamma x_N}$. Then
		$$
		W_{x_N}(x)=\gamma e^{\gamma x_N}, \qquad \frac{W(x)^p}{[W_{x_N}(x)]^{p-1}} =\frac{1}{\gamma^{p-1}}e^{\gamma x_N}
		$$
		and therefore \eqref{timao-campeao} becomes
		$$
			\gamma\int_{\mathbb{R}^N_+} e^{\gamma x_N}|u|^pdx+p\int_{\mathbb{R}^{N-1}} |u|^{p}dx'\leq \gamma \left( \frac{p}{\gamma}\right)^p \int_{\mathbb{R}^{N}_+}e^{\gamma x_N}|\nabla u|^pdx.
		$$
 \end{example}

\begin{example}
    If $W(x):=\arctan(x_N)$, then
		$$
			W_{x_N}(x)=\frac{1}{1+x_N^2},\qquad \frac{W(x)^p}{[W_{x_N}(x)]^{p-1}}= \frac{\arctan^p (x)}{(1+x_N^2)^{p-1}}
		$$
		and therefore \eqref{timao-campeao} gives
		$$
			\int_{\mathbb{R}^N_+} \frac{|u|^p}{(1+x_N^2)}dx\leq \left(\frac{p\pi}{2}\right)^p\int_{\mathbb{R}^{N}_+}(1+x_N^2)^{p-1}|\nabla u|^pdx.
		$$
\end{example}

 \subsection{Decreasing weights}

In what follows, we present examples of weights to which we can apply the Theorem \ref{Hardy-Type II}.

	\begin{example}\label{Example-Decreassing}
        Let  $p>1$, $\gamma<p-1$ and consider
		$W(x):=(1+x_N)^{\gamma-p+1}$. Then,
		$$
		-W_{x_N}(x)=(-\gamma+p-1)(1+x_N)^{\gamma-p}, \qquad
		\frac{W(x)^p}{[-W_{x_N}(x)]^{p-1}}=\frac{(1+x_N)^\gamma}{(-\gamma+p-1)^{p-1}},
		$$
		and therefore Theorem \ref{Hardy-Type II} with $\psi\equiv 0$ yields 
      	$$
			\begin{aligned}
				\left(\frac{-\gamma+p-1}{p}\right)^{p}\int_{\mathbb{R}^N_+} \frac{|u|^p}{(1+x_N)^{p-\gamma}}dx&\leq \int_{\mathbb{R}^{N}_+}(1+x_N)^\gamma|\nabla u|^pdx
                \\
				&+ \left(\frac{-\gamma+p-1}{p}\right)^{p-1}	\int_{\mathbb{R}^{N-1}} |u|^{p}dx'.
			\end{aligned}
		$$
    In particular, if $\gamma=0$, we have the inequality 
        $$
		\int_{\mathbb{R}^N_+} \frac{|u|^p}{(1+x_N)^{p}}dx\leq \left(\frac{p}{p-1}\right)^p\int_{\mathbb{R}^N_+} |\nabla u|^pdx + \left( \frac{p}{p-1} \right) \int_{\mathbb{R}^{N-1}} |u|^{p}dx',
		$$
        which improves \cite[Theorem~1.1]{Abreu-Felix-Medeiros}. 
   \end{example}

   \begin{example}  Let $\gamma>0$ and consider
		$W(s):=e^{-\gamma x_N}$. Then
		$$
		-W_{x_N}(x)=\gamma e^{-\gamma x_N}, \qquad \frac{W(x)^p}{[-W_{x_N}(x)]^{p-1}} =\frac{1}{(-\gamma)^{p-1}}e^{-\gamma x_N}.
		$$
		and therefore
		$$
			\gamma\int_{\mathbb{R}^N_+} e^{-\gamma x_N}|u|^pdx \leq p\int_{\mathbb{R}^{N-1}} |u|^{p}dx' + \frac{p^p}{(-\gamma)^{p-1}}\int_{\mathbb{R}^{N}_+}e^{-\gamma x_N}|\nabla u|^pdx.
		$$
\end{example}

\section{Proofs and Consequences in the Increasing Case} \label{sec:inc_weight_proofs}

This section presents the proofs of Theorem~\ref{Hardy-Grafico}, that is, the Hardy inequality for increasing weight functions $W$, together with several consequences, including weighted Sobolev embeddings.

\begin{proof}[Proof of Theorem~\ref{Hardy-Grafico}]
		Given $u\in C_0^\infty(\mathbb{R}^N)$ and $x'\in \mathbb{R}^{N-1}$, we have that 
\begin{equation*}
     \int_{\psi(x')}^\infty \left(W|u|^p\right)_{x_N}dx_N=-	W(x',\psi(x'))|u(x',\psi(x')|^{p}
 \end{equation*}
 and therefore we can use $W_{x_N}\in L^1_{loc}({\Omega})$ to get
 $$
			\int_{\psi(x')}^\infty W_{x_N}|u|^pdx_N+\int_{\psi(x')}^\infty W(|u|^p)_{x_N}dx_N
            =-	W(x',\psi(x'))|u(x',\psi(x')|^{p}.
		$$
Integrating over $\mathbb{R}^{N-1}$ and using Fubini's Theorem, we obtain
		\begin{align*}
		    - \int_{\mathbb{R}^{N-1}} W(x',\psi(x'))|u(x',\psi(x'))|^{p}dx' & =          
            \int_\Omega W_{x_N}|u|^pdx + \int_\Omega W(|u|^p)_{x_N}dx  \\
            & = \int_\Omega W_{x_N}|u|^pdx + \int_\Omega pW|u|^{p-2} u 
            \frac{\partial u}{\partial x_N}dx.
		\end{align*}
        Hence
		\begin{equation}\label{Rita}
			\dint W_{x_N}|u|^pdx+ 	\int_{\mathbb{R}^{N-1}} W(x',\psi(x'))
            |u(x',\psi(x'))|^{p}dx' 
            \leq p \int_\Omega W|u|^{p-1}|\nabla u|dx.
		\end{equation}
  
  Recall that, for any $\varepsilon>0$ and $a,\,b \geq 0$, Young's inequality yields
		$$
			ab=\left(\frac{\varepsilon}{p-1}\right)^{(p-1)/p}a
            \left(\frac{p-1}{\varepsilon}\right)^{(p-1)/p}b
			 \leq\frac{\varepsilon}{p}a^{p/(p-1)}+\frac{1}{p}
             \left(\frac{p-1}{\varepsilon}\right)^{p-1}b^p.
		$$
Now, taking into account that $W_{x_N}>0$, we have
$$
    W|u|^{p-1}|\nabla u|= \Big( \left[W_{x_N}\right]^{(p-1)/p}|u|^{p-1} \Big) 
    \Big( W \left[W_{x_N}\right]^{{-(p-1)/p}} |\nabla u|\Big).
$$
Since  $W^p/[W_{x_N}]^{p-1}  \in L^1_{loc}(\Omega)$, we can pick $\varepsilon = (p-1)/p$ 
and use the above expression to obtain
		\begin{align*}
		    p\int_\Omega W|u|^{p-1}|\nabla u|dx & \leq \varepsilon\int_\Omega W_{x_N}|u|^pdx
            +\left(\frac{p-1}{\varepsilon}\right)^{p-1}\int_\Omega
            \frac{W^p}{\left[W_{x_N}\right]^{p-1}}|\nabla u|^p dx \\
            & = \frac{p-1}{p} \int_\Omega W_{x_N}|u|^pdx + p^{p-1} \int_\Omega
            \frac{W^p}{\left[ W_{x_N}\right]^{p-1}}|\nabla u|^p dx.
		\end{align*}
Replacing the above  inequality in \eqref{Rita} we obtain the desired result.
\end{proof}

Next, we highlight our inequality in the case of the upper half-space $\mathbb{R}^{N}_+$, 
which can be obtained by setting $\psi\equiv0$.

\begin{corollary}\label{Hardy-Type I} Suppose \eqref{W_0} and \eqref{W_1^+}. Then, for 
any $u \in C_0^\infty(\mathbb{R}^N)$, the following inequality is satisfied:
		\begin{equation}\label{Hardy-Geral-Positiva}
			\int_{\mathbb{R}^N_+} W_{x_N}|u|^pdx+p	\int_{\mathbb{R}^{N-1}}  
            W(x',0)|u|^{p}dx'\leq p^p\int_{\mathbb{R}^{N}_+} 
            \frac{W^p}{[W_{x_N}]^{p-1}} |\nabla u|^pdx.
		\end{equation}
		In particular, if $u \in C_0^\infty(\mathbb{R}^N_+)$, then
		$$
		\int_{\mathbb{R}^N_+} W_{x_N}|u|^pdx\leq p^p\int_{\mathbb{R}^{N}_+} 
        \frac{W^p}{[W_{x_N}]^{p-1}} |\nabla u|^pdx.
		$$
	\end{corollary}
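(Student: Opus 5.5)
This corollary is Theorem~\ref{Hardy-Grafico} specialized to the graph $\psi\equiv 0$. I would not redo any of the integration by parts or Young's inequality argument, but check only that the specialization is legitimate and that the boundary term vanishes in the second statement.

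For the first inequality, take $\psi\equiv 0$. This is a continuous function on $\mathbb{R}^{N-1}$, so it is admissible in the setting of Theorem~\ref{Hardy-Grafico}. The associated domain is
\[
\Omega=\{x_N>0\}=\mathbb{R}^N_+,
\]
up to the measure-zero hyperplane $\{x_N=0\}$, which does not affect any of the integrals. Hypotheses \eqref{W_0} and \eqref{W_1^+} are assumed on exactly this $\Omega$. The boundary point $(x',\psi(x'))$ becomes $(x',0)$, so the trace term in \eqref{Hardy-Geral-Grafico} reads
\[
p\int_{\mathbb{R}^{N-1}}W(x',0)\,|u(x',0)|^p\,dx'.
\]
Hence \eqref{Hardy-Geral-Grafico} becomes \eqref{Hardy-Geral-Positiva} verbatim, for every $u\in C_0^\infty(\mathbb{R}^N)$.

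For the particular case, let $u\in C_0^\infty(\mathbb{R}^N_+)$, where $\mathbb{R}^N_+$ is understood as the open half-space, so that $\operatorname{supp}u$ is a compact set at positive distance from $\{x_N=0\}$. Extending $u$ by zero gives a function in $C_0^\infty(\mathbb{R}^N)$, so \eqref{Hardy-Geral-Positiva} applies to it. Since $u(x',0)=0$ for all $x'$, the trace integral vanishes, and the stated inequality follows. Because $W\ge 0$, one could equally just drop that term as nonnegative.

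The only point needing care is this interpretation of $C_0^\infty(\mathbb{R}^N_+)$. The paper defines $\mathbb{R}^N_+$ with $x_N\ge 0$, but test functions in $C_0^\infty(\mathbb{R}^N_+)$ should mean compact support in the open half-space. That is the convention under which \eqref{Hardy-Classical} is stated, and it guarantees that $u$ vanishes on $\{x_N=0\}$. Otherwise the argument is an immediate specialization, with no genuine obstacle.
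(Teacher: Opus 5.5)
Your proposal is correct and follows the paper's argument exactly: the paper obtains the corollary by setting $\psi\equiv 0$ in Theorem~\ref{Hardy-Grafico}, so that $\Omega=\{x_N>0\}$ and the trace term becomes $p\int_{\mathbb{R}^{N-1}}W(x',0)|u(x',0)|^p\,dx'$. Your observation that this term can simply be discarded because $W\ge 0$ is also valid, and it shows the second inequality in fact holds for every $u\in C_0^\infty(\mathbb{R}^N)$, not only for $u$ supported in the open half-space.
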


As a consequence, we have the following:

\begin{corollary}\label{Singular-1}
 For any $\gamma>p-1$ the following inequality holds
\begin{equation}\label{Hardy-singular}
 \left|\frac{\gamma -p+1}{p}\right|^{p}\int_{\mathbb{R}^N_+} \frac{|u|^p}{x_N^{p-\gamma}}\,dx \leq  
\int_{\mathbb{R}^N_+} x_N^{\gamma}|\nabla u|^p \, dx,\qquad \forall\, u\in C_0^\infty(\mathbb{R}^N).   \end{equation}
\end{corollary}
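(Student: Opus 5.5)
We apply Corollary~\ref{Hardy-Type I} (the case $\psi\equiv 0$ of Theorem~\ref{Hardy-Grafico}) with the power weight
$$
W(x',x_N) := x_N^{\gamma-p+1}, \qquad x_N>0 .
$$
Set $\alpha := \gamma-p+1$. Since $\gamma>p-1$, we have $\alpha>0$.

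\emph{Step 1: the hypotheses.} We have $W\ge 0$ on $\overline{\mathbb{R}^N_+}$, and $W$ extends continuously by $W(x',0)=0$. Moreover
$$
W_{x_N}=\alpha x_N^{\alpha-1}>0 \quad\text{in } \mathbb{R}^N_+ .
$$
This function is locally integrable even up to $x_N=0$, because $\alpha-1>-1$; this is precisely where $\gamma>p-1$ enters. Next,
$$
\frac{W^p}{[W_{x_N}]^{p-1}}=\alpha^{1-p}\,x_N^{p\alpha-(p-1)(\alpha-1)}=\alpha^{1-p}\,x_N^{\alpha+p-1}=\alpha^{1-p}\,x_N^{\gamma}.
$$
This is continuous and locally integrable, since $\gamma>0$. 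Hence \eqref{W_0} and \eqref{W_1^+} hold.

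\emph{Step 2: apply Corollary~\ref{Hardy-Type I}.} For $u\in C_0^\infty(\mathbb{R}^N)$, the boundary term is
$$
p\int_{\mathbb{R}^{N-1}}W(x',0)|u|^p\,dx' ,
$$
and it vanishes because $W(x',0)=0$. So \eqref{Hardy-Geral-Positiva} gives
$$
\alpha\int_{\mathbb{R}^N_+}x_N^{\alpha-1}|u|^p\,dx\le p^p\alpha^{1-p}\int_{\mathbb{R}^N_+}x_N^\gamma|\nabla u|^p\,dx .
$$
Note that $x_N^{\alpha-1}=x_N^{-(p-\gamma)}$. Multiplying both sides by $\alpha^{p-1}/p^p$ yields \eqref{Hardy-singular}, with constant $(\alpha/p)^p=\left|\frac{\gamma-p+1}{p}\right|^p$.

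\emph{Main obstacle.} The only delicate point is that the proof of Theorem~\ref{Hardy-Grafico} integrates $(W|u|^p)_{x_N}$ along vertical lines starting from the boundary. That step needs $W_{x_N}$ to be integrable near $x_N=0$ on each line, so that the fundamental theorem of calculus gives the boundary value $W(x',0)|u(x',0)|^p=0$. Here $x_N\mapsto x_N^\alpha$ is absolutely continuous on $[0,R]$ when $\alpha>0$, so this holds.

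We will therefore check explicitly that $W_{x_N}\in L^1_{\rm loc}(\overline{\mathbb{R}^N_+})$ and not merely in $L^1_{\rm loc}(\mathbb{R}^N_+)$. If one prefers to stay strictly within the stated hypotheses, a backup is to apply the inequality to the translated weight $(x_N+\varepsilon)^{\alpha}$ and let $\varepsilon\to0$. In that case the boundary term $p\,\varepsilon^\alpha\int|u(x',0)|^p\,dx'$ tends to $0$. The remaining terms converge by monotone or dominated convergence, because $u$ has compact support and $\alpha>0$.
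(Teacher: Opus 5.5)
Your proposal is correct and matches the paper's proof. The paper applies Corollary~\ref{Hardy-Type I} to the regularized weights $(\varepsilon+x_N)^{\theta}$ with $\theta=\gamma-p+1$ and lets $\varepsilon\to0^+$, which is exactly your backup argument. Your direct choice $W=x_N^{\gamma-p+1}$ is also valid: you correctly identify the one point the $\varepsilon$-regularization sidesteps, namely the fundamental theorem of calculus down to $x_N=0$ and the finiteness of $\int W_{x_N}|u|^p$ needed to absorb the Young term, and both hold because $x_N^{\gamma-p+1}$ is absolutely continuous on $[0,R]$.
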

\begin{proof}
For any $\varepsilon>0$, we consider the weight $W_\varepsilon(x):=
(\varepsilon+x_N)^\theta$, with $\theta>0$, and use Corollary~\ref{Hardy-Type I} to get
$$
\theta\int_{\mathbb{R}^N_+}(\varepsilon+x_N)^{\theta-1}|u|^pdx+p \varepsilon^\theta
\int_{\mathbb{R}^{N-1}}|u(x',0)|^pdx'\leq\frac{p^p}{\theta^{p-1}}
\int_{\mathbb{R}^N_+}(\varepsilon+x_N)^{p+\theta-1}|\nabla u|^pdx.
$$
Picking  $\theta = \gamma-p+1>0$ and letting $\varepsilon\rightarrow0^+$ we 
obtain \eqref{Hardy-singular}. 
\end{proof}
    
\begin{remark}\label{Remak-Singular}

Corollary~\ref{Singular-1} shows that, if \( \gamma >-1\),  
inequality \eqref{Serinha} holds in  $C_0^\infty(\mathbb{R}^N)$. Moreover, the 
interval $(p-1,+\infty)$  is optimal in the sense that inequality \eqref{Serinha}is 
false if  $\gamma\leq p-1$. In fact, 
if $\gamma\leq p-1$, a function 
$\phi \in C_0^\infty(\mathbb{R}^N)$ such that $\phi \equiv1$ in the 
unit ball $B_1(0)$ provides a counter-example, which one can 
see from the following computation: 
$$
+\infty=\left|\frac{\gamma-p+1}{p}\right|^p\int_{B_1(0) \cap \mathbb{R}^N_+} 
\frac{1}{x_N^{p-\gamma}}\,dx\leq \left|\frac{\gamma-p+1}{p}\right|^p \int_{\mathbb{R}^N_+} 
\frac{|\phi|^p}{x_N^{p-\gamma}}\,dx \leq \int_{\mathbb{R}^N_+} x_N^{\gamma}
|\nabla \phi|^pdx<\infty,
$$
which is a contradiction.

In particular, this shows that the validity of the classical 
Hardy inequality in the upper half-space, as stated in \eqref{Hardy-Classical} 
(which corresponds to $\gamma=0$), does not hold for functions that 
do not vanish along $\{ x_N = 0\}$. Similarly, the 
Hardy-Sobolev-Maz'ya inequality \eqref{Mazya}  does not hold in $C_0^\infty
(\mathbb{R}^N)$. This highlights how the situation changes when we consider functions 
in \( C_0^\infty(\mathbb{R}^N) \) instead of \( C_0^\infty(\mathbb{R}^N_{+}) \) in 
the context of Hardy-type inequalities.

We also emphasise that, in view of  inequality \eqref{Hardy-singular} and a scaling 
argument, we conjecture that the following weighted Hardy-Sobolev-Maz'ya inequality
$$
C_0\left(\int_{\mathbb{R}^N_+} x_N^\frac{N\gamma}{N-p}|u|^{p^*}dx
\right)^{p/p^*}\leq\int_{\mathbb{R}^N_+} x_N^{\gamma}|\nabla u|^p \, dx
-\frac{1}{(\gamma -p+1)^{p}} \int_{\mathbb{R}^N_+} \frac{|u|^p}{x_N^{p-\gamma}}\,dx
,\qquad \forall\, u\in C_0^\infty(\mathbb{R}^N),
$$
holds for any $\gamma>p-1$ and some constant $C_0=C_0(N,p,\gamma)$, where $p^*:=Np/(N-p)$.
\end{remark}

\subsection{Weighted Sobolev embeddings: the increasing case}

To establish the upcoming results, let $\mathcal{D}_W^+$ denote the completion of $C_0^\infty(\mathbb{R}^N)$ with respect to the norm $\|\cdot\|_{\mathcal{D}_W^+}$ given by 
$$
    \|u\|_{\mathcal{D}_W^+}:=\left(\dint \frac{W^p}{[W_{x_N}]^{p-1}} 
    |\nabla u|^pdx\right)^{1/p}.
$$

We interpolate the weighted Hardy inequality \eqref{Hardy-Geral-Grafico} with classical Sobolev inequalities to derive weighted Sobolev embeddings. Such embeddings play a central role in the analysis of nonlinear elliptic equations. We first introduce some notation. For all $q \geq 1$ and non-negative $V \in L^1_{loc}(\Omega)$, define the weighted Lebesgue space 
	$$
		L^q(\Omega,V) := \left\{ u \in L^1_{loc}(\Omega)\,:\, \|u\|_{q,V} := 
        \left( \dint V(x)|u|^q dx \right)^{1/q} < +\infty \right\}.
	$$
From now on, we assume that $\psi \in C^1(\mathbb{R}^{N-1}, \mathbb{R})$, since we shall make use of the change of variables theorems.   If we denote, for $s \in [0,p]$, 
$$
q(s) := \frac{(N-s)p}{(N-p)}, 
$$
and assume
   \begin{equation} \label{W_2^+} \tag{$W_2^+$}
     \text{there exists } c_1>0 \text{ such that } \frac{W^p}{[W_{x_N}]^{p-1}} 
     \geq c_1 \text { a.e. in } \Omega.
    \end{equation} 
we shall prove the following:
    \begin{theorem}[Sobolev embedding]\label{Sobolev} 
Suppose \eqref{W_0}, \eqref{W_1^+} and   \eqref{W_2^+}. If $1<p<N$, then the following 
weighted Sobolev embedding is continuous
  $$
  \mathcal{D}_W^+ \hookrightarrow L^{q(s)}(\Omega,W^{s/p}_{x_N}),  \quad 
  \text{for all } s\in [0,p].
  $$
	\end{theorem}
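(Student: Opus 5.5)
We interpolate between the Hardy inequality of Theorem~\ref{Hardy-Grafico} (the endpoint $s=p$, where $q(p)=p$ and the weight is $W_{x_N}$) and a Sobolev inequality on $\Omega$ (the endpoint $s=0$, where $q(0)=p^*=Np/(N-p)$ and the weight is $1$). By density it suffices to prove
$$
\Big(\dint W_{x_N}^{s/p}|u|^{q(s)}dx\Big)^{1/q(s)}\le C\,\|u\|_{\mathcal{D}_W^+},\qquad u\in C_0^\infty(\mathbb{R}^N).
$$

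\emph{Endpoint $s=p$.} Discard the (nonnegative) boundary term in \eqref{Hardy-Geral-Grafico}. This gives $\|u\|_{p,W_{x_N}}^p\le p^p\|u\|_{\mathcal{D}_W^+}^p$.

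\emph{Endpoint $s=0$.} Since $\psi\in C^1$, the map $\Phi(x',x_N)=(x',x_N-\psi(x'))$ is a bijection $\Omega\to\mathbb{R}^N_+$ with Jacobian determinant $1$, so it preserves $L^{p^*}$ norms. The difficulty is that $|\nabla(u\circ\Phi^{-1})|\le(1+|\nabla'\psi|)|\nabla u|\circ\Phi^{-1}$, and this is only comparable to $|\nabla u|$ if $\nabla\psi$ is bounded. I would therefore try to avoid the change of variables and use the Gagliardo–Nirenberg route directly on $\Omega$:
\begin{enumerate}
\item Extend $u|_\Omega$ by even reflection across the graph, $\tilde u(x',x_N)=u(x',2\psi(x')-x_N)$ for $x_N<\psi(x')$.
\item Alternatively, apply the one-dimensional estimate $|u(x)|\le\int|\partial_i u|\,dt$ along lines. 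In every direction other than $e_N$, lines may exit $\Omega$, so this only controls $u$ up to boundary values.
\end{enumerate}
Because of this, the cleanest plan is to use that the boundary term in \eqref{Hardy-Geral-Grafico} is controlled: for $s=0$, work with the extension by reflection in the $x_N$ direction, which is volume preserving. If $\nabla\psi$ is unbounded this step may need extra care, or the paper may implicitly assume $\Omega$ is a Sobolev extension domain. I expect this to be the main obstacle. Granting it, the classical inequality \eqref{Sobolev-Classical} together with \eqref{W_2^+} gives
$$
\mathcal{S}\,\|u\|_{L^{p^*}(\Omega)}^p\le C\dint|\nabla u|^p\,dx\le \frac{C}{c_1}\|u\|^p_{\mathcal{D}_W^+}.
$$

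\emph{Interpolation.} For $0<s<p$ write
$$
W_{x_N}^{s/p}|u|^{q(s)}=\big(W_{x_N}|u|^p\big)^{s/p}\,|u|^{q(s)-s}.
$$
Apply Hölder with exponents $p/s$ and $p/(p-s)$. The second factor becomes $|u|^{(q(s)-s)p/(p-s)}$, and a direct computation gives $(q(s)-s)\frac{p}{p-s}=\frac{Np}{N-p}=p^*$. Hence
$$
\dint W_{x_N}^{s/p}|u|^{q(s)}dx\le\Big(\dint W_{x_N}|u|^p\Big)^{s/p}\Big(\dint|u|^{p^*}\Big)^{(p-s)/p}
\le C\|u\|_{\mathcal{D}_W^+}^{s+p^*(p-s)/p}.
$$
The exponent is $s+p^*(p-s)/p=s+N(p-s)/(N-p)=q(s)$. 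Taking $q(s)$-th roots gives the continuous embedding for every $s\in[0,p]$. It then extends to $\mathcal{D}_W^+$ by Fatou's lemma along approximating sequences.
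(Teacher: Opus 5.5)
Your argument is the paper's: the endpoint $s=p$ from \eqref{Hardy-Geral-Grafico} with the boundary term dropped, the endpoint $s=0$ from a Sobolev inequality on $\Omega$ combined with \eqref{W_2^+}, and H\"older with exponents $p/s$, $p/(p-s)$ using $(q(s)-s)\frac{p}{p-s}=p^*$ and $s+p^*(p-s)/p=q(s)$. The one step you do not prove, $\bigl(\int_\Omega|u|^{p^*}dx\bigr)^{p/p^*}\le C\int_\Omega|\nabla u|^p\,dx$ for $u\in C_0^\infty(\mathbb{R}^N)$, is a genuine gap. Your suspicion about it is correct, and it is the paper's justification that fails there.

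The paper obtains \eqref{GNS-Grafico} from the unit-Jacobian map $\Phi$, tacitly using $|\nabla(u\circ\Phi^{-1})|=|\nabla u|\circ\Phi^{-1}$. In fact $v(y)=u(y',y_N+\psi(y'))$ satisfies $\partial_{y_i}v=\partial_iu+\partial_Nu\,\partial_i\psi$ for $i<N$, so only $|\nabla v|\le(1+|\nabla'\psi|)\,|\nabla u|\circ\Phi^{-1}$ holds. The step therefore closes along the paper's route when $\psi$ is globally Lipschitz, with a constant depending on $\mathrm{Lip}\,\psi$. The paper only introduces that hypothesis for the trace theorem; you should add it here.

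Your reflection $\tilde u(x',x_N)=u(x',2\psi(x')-x_N)$ does not avoid the problem. Its tangential derivatives contain $2\,\partial_Nu\,\partial_i\psi$, so it needs the same bound. Also, the boundary term of \eqref{Hardy-Geral-Grafico} plays no role at $s=0$.

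Without such a bound the statement is false for every $s\in[0,p)$. Build the counterexample as follows.
\begin{itemize}
\item Let $\psi\le 0$ be smooth, equal to $-1$ on balls $B(x_k',\varepsilon_k)\subset\mathbb{R}^{N-1}$ and to $0$ outside $B(x_k',2\varepsilon_k)$, with $\varepsilon_k\to 0$ and the $x_k'$ far apart.
\item Let $W=e^{x_N}$. It satisfies \eqref{W_0} and \eqref{W_1^+}, and \eqref{W_2^+} with $c_1=e^{-1}$, because $x_N>-1$ on $\Omega$.
\item Let $u_k=\varphi(x_N)\eta_k(x')$, with $\varphi=1$ on $[-1,-\tfrac12]$ and $\varphi=0$ on $[-\tfrac14,\infty)$, and with $\eta_k=1$ on $B(x_k',2\varepsilon_k)$ and $\operatorname{supp}\eta_k\subset B(x_k',3\varepsilon_k)$.
\end{itemize}
On $\Omega$ the function $u_k$ is supported in $B(x_k',2\varepsilon_k)\times(-1,-\tfrac14)$. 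There $\nabla u_k=\varphi'(x_N)e_N$ and $W^p/[W_{x_N}]^{p-1}=e^{x_N}\le 1$, so $\|u_k\|_{\mathcal{D}_W^+}^p\le C\varepsilon_k^{N-1}$. On the other hand, $u_k=1$ and $W_{x_N}\ge e^{-1}$ on $B(x_k',\varepsilon_k)\times(-1,-\tfrac12)\subset\Omega$, so $\int_\Omega W_{x_N}^{s/p}|u_k|^{q(s)}dx\ge c\,\varepsilon_k^{N-1}$. Since $q(s)>p$, the ratio $\|u_k\|_{q(s),W_{x_N}^{s/p}}/\|u_k\|_{\mathcal{D}_W^+}\gtrsim\varepsilon_k^{(N-1)(1/q(s)-1/p)}\to\infty$.
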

 	
The 
starting pointing is noticing  that the map $\Phi:\Omega \to \mathbb{R}^N_+$ given by  
    $$
    \Phi(x',x_N) := (x',x_N-\psi(x'))
    $$
    is a diffeomorphism. So, given $u \in C_0^\infty(\mathbb{R}^N)$, we can use the change 
    of variable $x=\Phi^{-1}(y)$  and 
    the classical Gagliardo–Nirenberg–Sobolev inequality, hereafter referred to 
    as the GNS inequality,
     for $v:= u \circ \Phi^{-1}$, to get
    \begin{align*}
   \dint |u(x)|^{p^*} dx &= \int_{\mathbb{R}^N_+} |u(\Phi^{-1}(y))|^{p^*} 
   \left| J_{\Phi^{-1}}(y)\right| dy \\ & = \int_{\mathbb{R}^N_+} |v(y)|^{p^*} dy 
   \leq C \left( \int_{\mathbb{R}^N_+} |\nabla v(y)|^p dy \right)^{p^*/p}.
\end{align*}
    Using now the change of variables $y=\Phi(x)$ in the last integral above, we obtain
  \begin{equation}\label{GNS-Grafico}
    \dint |u|^{p^*} dx \leq C \left( \dint |\nabla u|^p dx \right)^{p^*/p}, \qquad 
    \forall\,u \in C_0^1(\mathbb{R}^N),
    \end{equation}
    that is, the GNS inequality holds with the integrals taken over the set $\Omega$.\\

We prove now our first embedding result.
\begin{proof}[Proof of Theorem \ref{Sobolev}]
We first recall that
$$q(s)=\frac{p(N-s)}{N-p}.
$$
If $s=p$, then $q(p)=p$ and the result is a direct consequence of \eqref{Hardy-Geral-Grafico}. On the other hand, since $q(0)=Np/(N-p)=p^*$,  
it follows from the GNS inequality \eqref{GNS-Grafico} and \eqref{W_2^+} that
 \begin{align*}
     \|u\|_{p^*}^{p^*} = \dint  |u|^{p^*}dx
			& \leq C\left(\dint  |\nabla u|^pdx\right)^{{p^*}/{p}} \\
            & \leq C \left( \frac{1}{c_1} \dint \frac{W^p}{[W_{x_N}]^{p-1}} 
            |\nabla u|^p dx \right)^{p^*/p} = 
            C c_1^{-p^*/p} 
            \|u\|_{\mathcal{D}_W^+}^{p^*},
 \end{align*}
 which proves the case $s=0$. 

For $0<s<p$, using H\"older's inequality, Theorem~\ref{Hardy-Grafico} and the 
above inequality  we have 
\begin{align*}
\int_\Omega W_{x_N}^{s/p}|u|^{q(s)}dx&=\int_\Omega (W_{x_N}^{s/p}|u|^s)|u|^{q(s)-s}dx\\
&\leq\left(\int_\Omega W_{x_N}|u|^pdx\right)^{s/p}\left(\int_\Omega|u|^{p^*}dx
\right)^{(p-s)/p}. \\
& \leq \left( p^p \|u\|_{\mathcal{D}_W^+}^p\right)^{s/p}  \left( 
Cc_1^{-p^*/p}\|u\|_{\mathcal{D}_W^+}^{p^*}\right)^{(p-s)/p}  = 
C_1 \|u\|_{\mathcal{D}_W^+}^{s + p^*(p-s)/p}.
\end{align*}
Using the definition of $q(s)$ we can show that
$$
s + \frac{p^*(p-s)}{p} = q(s)
$$
and we have done.
\end{proof}

\begin{remark}\label{Remark-General}Under the same hypotheses of Theorem \ref{Sobolev}, 
suppose also $W_{x_N} \in L^\infty({\Omega})$. Given $q_0 \in [p,p^*]$, we can use 
the definition of $q(s)$ 
to obtain $s_0 \in [0,p]$ such that $q(s_0)=q_0$. Thus
\begin{align*}
\int_\Omega W_{x_N}|u|^{q_0} dx &= \int_\Omega W_{x_N}^{(p-s_0)/p}W_{x_N}^{s_0/p}|u|^{q(s)} dx  
\\& \leq \|W_{x_N}\|_{L^{\infty}(\Omega)}^{(p-s)/p} \int_\Omega W_{x_N}^{s_0/p}|u|^{q(s_0)} dx 
\leq C_2 \|u\|_{\mathcal{D}_W^{+}}^{q(s_0)}
\end{align*}
and hence the Sobolev embedding $\mathcal{D}_W^+ \hookrightarrow L^{q_0}(\Omega,W_{x_N})$ is 
continuous, for all $q_0\in[p,p^*]$.
\end{remark}

    By using an iterative process based on the Gagliardo-Nirenberg-Sobolev inequality, 
    we also derive an embedding result when $p=N$. In this case,  we require that $W$ 
    depends only on $x_N$ and has slightly more regularity. Our result in the case $p=N$ is:
	
	\begin{theorem}[Sobolev embedding, borderline case]\label{Sobolev p=N} Suppose 
     $W=W(x_N)\in C^1((0,\infty),\mathbb{R})$ satisfies \eqref{W_0}, \eqref{W_1^+} 
     and \eqref{W_2^+}. If $p=N$, then the following weighted Sobolev embedding is continuous
    $$
    \mathcal{D}_W^+ \hookrightarrow L^q(\Omega,W_{x_N}),  \quad \text{for all } 
    q \in [N,+\infty).
    $$
    \end{theorem}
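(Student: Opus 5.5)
Let $u\in C_0^\infty(\mathbb{R}^N)$; since $\mathcal{D}_W^+$ is a completion, it suffices to show $\int_\Omega W_{x_N}|u|^q\,dx\le C\|u\|_{\mathcal{D}_W^+}^q$ for every $q\ge N$. The case $q=N$ is exactly Theorem~\ref{Hardy-Grafico} with $p=N$. For $q>N$ we apply the Hardy inequality to powers $|u|^{\alpha}$, iterating in the spirit of the Gagliardo--Nirenberg--Sobolev argument. The advantage of $W=W(x_N)$ is that all weights depend only on $x_N$, so $W_{x_N}$ does not interact with the tangential variables.

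First, we check that Theorem~\ref{Hardy-Grafico} applies to $v=|u|^\alpha$ for $\alpha\ge1$. This $v$ is only $C^1$ (Lipschitz) with compact support, but the proof of Theorem~\ref{Hardy-Grafico} uses only the fundamental theorem of calculus along vertical lines and Young's inequality. For $p=N$ it gives
\[
\int_\Omega W_{x_N}|u|^{\alpha N}dx\le N^N\alpha^N\int_\Omega \frac{W^N}{W_{x_N}^{N-1}}|u|^{(\alpha-1)N}|\nabla u|^N dx .
\]
Write $V:=W^N/W_{x_N}^{N-1}\ge c_1$ by \eqref{W_2^+}, and apply H\"older to the right-hand side with exponents $N$ and $N/(N-1)$, splitting as $(V^{1/N}|\nabla u|)^N\cdot|u|^{(\alpha-1)N}$. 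The difficulty is that the factor $|u|^{(\alpha-1)N}$ then carries no weight $W_{x_N}$, so it cannot be absorbed by the left-hand side directly.

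We handle this by also using the unweighted GNS inequality \eqref{GNS-Grafico} for exponents $r<N$, applied to $|u|^\beta$: $\|u\|_{\beta r^*}^\beta\le C\beta\| |u|^{\beta-1}\nabla u\|_r$. Together with $|\nabla u|^N\le c_1^{-1}V|\nabla u|^N$, this controls the unweighted norms $\|u\|_{s}$ for all $s\ge N$ in terms of $\|\nabla u\|_N\le c_1^{-1/N}\|u\|_{\mathcal{D}_W^+}$ and lower norms. This is the standard iteration showing $W^{1,N}\hookrightarrow L^s$ for all $s\in[N,\infty)$. To get started it needs a lower-order norm, and we take the weighted $L^N(W_{x_N})$ norm from Hardy for that purpose.

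Concretely, the plan is as follows. (i) Bound $\|u\|_{L^s(\Omega)}\le C_s\|u\|_{\mathcal{D}_W^+}$ for all $s\ge N$ by induction on $s$, using GNS on $|u|^{\beta}$ and H\"older. (ii) Then, for $q>N$, apply the Hardy estimate above with $\alpha=q/N$ and H\"older,
\[
\int_\Omega W_{x_N}|u|^{q}\le C\Big(\int V|\nabla u|^N\Big)\,\sup\!\text{-type bound on }|u|^{q-N},
\]
replacing the sup with a H\"older split that requires a weighted integrability of $V|\nabla u|^N$. Since that is unavailable, use instead Young's inequality $V|u|^{q-N}|\nabla u|^N$ $\to$ absorbed terms.

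The main obstacle is step (i): producing the starting $L^{s}$ bound without a weight. I would try to localize in $x_N$. On $\{x_N>1\}$, $W_{x_N}$ is bounded below on compacts only, which is where the $C^1$ regularity and one-dimensionality of $W$ are meant to be used. If that fails, the fallback is to apply GNS to $\sqrt[N]{W_{x_N}}\,|u|^\alpha$ directly, whose gradient in $x_N$ involves $W_{x_Nx_N}$; this is where the $C^1$ assumption would enter.
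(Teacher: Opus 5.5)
\textbf{There is a genuine gap.} Neither step of your plan closes, and the idea that actually produces the gain in integrability is missing.

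\emph{Step (ii) fails.} Applying the $p=N$ Hardy inequality to $|u|^{q/N}$ leaves $\int_\Omega V|u|^{q-N}|\nabla u|^N\,dx$ on the right. The product $V|\nabla u|^N$ is only known to be integrable, since the gradient already carries the full power $N$ available in $\|\cdot\|_{\mathcal D_W^+}$. So no H\"older or Young splitting can extract $|u|^{q-N}$ without an $L^\infty$ bound, and that bound is exactly what fails when $p=N$.

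\emph{Step (i) is false under the stated hypotheses, not merely hard.} Take $\Omega=\mathbb{R}^N_+$ and $W=\log(e+x_N)$, which is $C^1$ and satisfies \eqref{W_0}, \eqref{W_1^+}, \eqref{W_2^+}. Let $u_R(x)=\phi(x/R)$ with $\phi\in C_0^\infty(B_1)$ and $\phi\not\equiv 0$ on $\mathbb{R}^N_+$. Then $\|u_R\|_{\mathcal D_W^+}^N\le (e+R)^{N-1}\log^N(e+R)\,\|\nabla\phi\|_N^N$, while $\|u_R\|_{L^s(\mathbb{R}^N_+)}^s=R^N\|\phi\|_s^s$. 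Hence no bound $\|u\|_{L^s}\le C_s\|u\|_{\mathcal D_W^+}$ holds for $N\le s<N^2/(N-1)$. The weighted $L^N(W_{x_N})$ norm cannot serve as your ``lower-order norm'', because $W_{x_N}$ may decay at infinity. Your fallback, GNS applied to $W_{x_N}^{1/N}|u|^\alpha$, requires $W_{x_Nx_N}$, which a $C^1$ weight does not provide.

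\emph{The missing idea} is to put the weight $W$ itself, not $W_{x_N}$, into the $L^1$-Sobolev inequality \eqref{GNS n=1}, with $v=W|u|^N\in C^1_0$. This is exactly where $W\in C^1$ and $W=W(x_N)$ enter: the latter gives $|\nabla W|=W_{x_N}$. One gets
\[
\Big(\int_\Omega W^{\frac{N}{N-1}}|u|^{\frac{N^2}{N-1}}dx\Big)^{\frac{N-1}{N}}\le C\int_\Omega\big(W_{x_N}|u|^N+NW|u|^{N-1}|\nabla u|\big)\,dx .
\]
The first term on the right is controlled by Hardy. The second is controlled by Young with the splitting $\big(W_{x_N}^{(N-1)/N}|u|^{N-1}\big)\big(W\,W_{x_N}^{-(N-1)/N}|\nabla u|\big)$, plus Hardy again.

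Hypothesis \eqref{W_2^+} is then used not to compare $|\nabla u|^N$ with $V|\nabla u|^N$, as you do. It is used in the form $W_{x_N}\le c_1^{-1/(N-1)}W^{N/(N-1)}$, which turns the left side into a bound on $\int_\Omega W_{x_N}|u|^{N^2/(N-1)}$.

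To finish, interpolate with $q=N$. Then iterate with $v=W|u|^{N+k}$, using H\"older with the same splitting and the previous step at the exponent $(N+k-1)N/(N-1)$. This yields every $q\ge N$ without ever passing through unweighted norms, which, as the example shows, are not controlled.
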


    	\begin{proof}
		It follows from  \eqref{GNS-Grafico},  with $p=1$, that 
		\begin{equation}\label{GNS n=1}
			\left(\dint |v|^{\frac{N}{N-1}}dx\right)^{\frac{N-1}{N}} \leq C_1\dint |\nabla v|dx,
            \quad \forall \,v \in C_0^1(\mathbb{R}^N).
		\end{equation}
		 Given $u \in C^{\infty}_0(\mathbb{R}^N)$, we may recall that $W \in C^1$ to  
         pick   $v:=W|u|^N \in C_0^1(\mathbb{R}^N)$  in the above inequality  and obtain
      $$
		\begin{aligned}
			\left(\dint W^{\frac{N}{N-1}}(x_N)|u|^{\frac{N^2}{N-1}}dx\right)^{\frac{N-1}{N}}& 
            \leq C_1\dint |\nabla (W(x_N)u^N)|dx\\
			&= C_1  \dint \left[ W_{_{x_N}}(x_N)|u|^N+NW(x_N)|u|^{N-1}|\nabla u|\right]dx.
		\end{aligned}
		$$
        From Theorem \ref{Hardy-Grafico}, we obtain 
		$$
		\dint W_{x_N}(x_N)|u|^Ndx\leq N^ N\|u\|_{\mathcal{D}_W^+}^N.
		$$
		On the other hand, since
        $$
        W|u|^{N-1}|\nabla u|= \Big( [W_{x_N}]^{(N-1)/N}|u|^{N-1} \Big) 
        \Big( W [W_{x_N}]^{{-(N-1)/N}} |\nabla u|\Big),
        $$
        we can apply Young's inequality with 
        exponents $N/(N-1)$ and $N$ to get
      \begin{align*}
			N\dint W(x_N)|u|^{N-1}|\nabla u|dx	\leq&(N-1)
            \dint W_{x_N}(x_N)|u|^Ndx+\|u\|^N_{\mathcal{D}_W^+}.
		\end{align*}
        Using \eqref{W_2^+} and all the above inequalities, we obtain
        \begin{equation}\label{key2}	
        \|u\|_{\frac{N^2}{N-1},W_{x_N}}^N   \leq \frac{1}{c_1}\left(\dint  
        W(x_N)^{\frac{N}{N-1}}|u|^{\frac{N^2}{N-1}}dx\right)^{\frac{N-1}{N}} \leq 
        C_2\|u\|_{\mathcal{D}_W^+}^N,
        \end{equation}
        with $C_2=C_1 (N^{N+1}+1)$ depending only on $N$. 
        
        We infer from the above inequality that the statement of the theorem holds 
        for $q=N^2/(N-1)$.  
        On the other hand, Theorem \ref{Hardy-Grafico} implies that it also holds 
        for $q=N$. Hence, if $N<q<N^2/(N-1)$, we can consider $\theta \in (0,1)$ such that
        $$
            \frac{1}{q} = (1-\theta)\frac{1}{N} + \theta \frac{(N-1)}{N^2}
        $$
        and use H\"older's inequality with exponents $s=\frac{N^2}{(N-1)\theta q}$ 
        and $s'=\frac{N}{(1-\theta)q}$ to get
        \begin{align*}
        \dint W_{x_N}(x_N)|u|^q dx &= \dint [W_{x_N}(x_N)]^{1/s} |u|^{\theta q} 
        [W_{x_N}(x_N)]^{1/s'}|u|^{(1-\theta)q} dx \\ & \leq \left( 
        \dint W_{x_N}(x_N)|u|^{\frac{N^2}{N-1}} dx \right)^{\frac{(N-1)}{N^2} 
        \theta q} \left( \dint W_{x_N}(x_N)|u|^{N} dx \right)^{\frac{1}{N}(1-\theta)q} \\
        & \leq \|u\|_{\mathcal{D}_W^+}^{\theta q} \|u\|_{\mathcal{D}_W^+}^{(1-\theta) q}  
        = \|u\|_{\mathcal{D}_W^+}^{q},
        \end{align*}
        and therefore  
        Theorem \ref{Sobolev p=N} is valid for all $q \in [N,N^2/(N-1)]$. 
        
        We now notice that, since $N<N+1<N^2/(N-1)$,  we have that
		$$
			\left(\dint W_{x_N}(x_N)|u|^{N+1}dx\right)^{\frac{N}{N+1}}\leq 
            C_3\|u\|_{\mathcal{D}_W^+}^N.
		$$
		Hence, applying~\eqref{GNS n=1} with $v:=W|u|^{N+1}$ and arguing as before, we get
			\begin{align*}
			\left(\dint W^{\frac{N}{N-1}}(x_N)|u|^{\frac{N(N+1)}{(N-1)}} \right)^{\frac{N-1}{N}}
            &\leq C_4\left(\dint W_{x_N}(x_N)|u|^{N+1}dx +\dint W(x_N)|u|^N
            |\nabla u|dx\right) \\
			   &\leq  C_4 \left( C_3^{\frac{N+1}{N}}\|u\|_{\mathcal{D}_W^+}^{N+1} + 
               \dint W(x_N)|u|^N|\nabla u|dx\right) .
		\end{align*}
		Using H\"older's inequality with exponents $N/(N-1)$ and $N$, 
        besides \eqref{key2}, we obtain
      \begin{align*}
			\dint  W(x_n)|u|^{N}|\nabla u|dx=&\dint \left( 
            [W_{x_N}(x_N)]^{\frac{N-1}{N}}|u|^N \right) 
            \left( \frac{W(x_N)}{[W_{x_N}(x_N)]^{\frac{N-1}{N}}}|\nabla u| \right)dx\\
			\leq&\left(\dint W_{x_N}(x_N)|u|^{\frac{N^2}{(N-1)}}dx\right)^{\frac{N-1}{N}} 
            \|u\|_{\mathcal{D}_W^+}  \leq C_2\|u\|_{\mathcal{D}_W^+}^{N}.
		\end{align*}
        All the above inequalities, together with \eqref{W_2^+}, provide
        \begin{align*}
			\|u\|_{\frac{N(N+1)}{(N-1)},W_{x_N}}^N & = \left(\dint 
            W_{x_N}(x_N)|u|^{\frac{N(N+1)}{(N-1)}}dx \right)^{\frac{N-1}{N+1}} \\
            &\leq c_1^{-1/(N+1)}\left(\dint W^{\frac{N}{N-1}}(x_N)|u|^{\frac{N(N+1)}{(N-1)}}dx 
            \right)^{\frac{N-1}{N+1}} \leq C_5 \|u\|_{\mathcal{D}_W^+}^N,
		\end{align*}
        and therefore  the continuous  embedding holds for $N \leq q \leq N(N+1)/(N-1)$.
		
		We can now iterate this process to obtain the validity of the embedding in any interval 
        of type $\left[N,N(N+k)/(N-1)\right]$,  with $k \in \mathbb{N}$, concluding the 
        proof of the theorem.
	\end{proof}

    \begin{remark} \label{chuva-sem-x'}
        It is interesting to look for conditions which ensure a result similar to that 
        of Theorem~\ref{Sobolev p=N} when the potential $W$ depends also on 
        $x' \in \mathbb{R}^{N-1}$. A simple inspection of the proof shows that 
        this is the case if $W=W(x)$ satisfies the following assumption:
\begin{equation} \label{chuva}
    |\nabla W(x)| \leq c_4|W_{x_N}(x)|, \qquad \forall \, x \in \Omega.
\end{equation}
Actually, if this is true, we can compute at the beginning of the proof
$$
\dint |\nabla W(x)||u|^N dx \leq C_6 \dint |W_{x_N}(x)||u|^N dx + 
N \dint W(x)|u|^{N-1}|\nabla u| dx,
$$
and the result follows from the original argument line by line.

An inequality like the one in \eqref{chuva} holds, for example, if the potential $W$ 
has the  form $W(x', x_N) = W_1(x_N)W_2(x')$  with 
        $$
        W_1 \left[ \frac{\partial W_1}{\partial x_N}    \right]^{-1} \in  
        L^\infty((0,\infty)), \qquad \frac{\partial W_2}{\partial x_i} \in 
        L^\infty(\mathbb{R}^{N-1}),  
        $$
        for any $i=1,\ldots,N-1$. 
    \end{remark}

\begin{remark} \label{embedding-in-w1p} Suppose \eqref{W_0}, \eqref{W_1^+}, 
\eqref{W_2^+} and additionally  $W_{x_N}(x) \geq C_1 >0$, for any $x\in \Omega$. Then, 
given $u\in C_0^\infty(\mathbb{R}^N)$, we may invoke  Theorem ~\ref{Hardy-Grafico} to get	
		$$
			C_1 \dint  |u|^pdx 	\leq p^p\dint \frac{W^p}{[W_{x_N}]^{p-1}}|\nabla u|^p dx.
		$$
        Moreover, from \eqref{W_2^+}, we obtain
        $$
        \dint |\nabla u|^p dx \leq \frac{1}{c_1} \dint 
        \frac{W^p}{[W_{x_N}]^{p-1}}|\nabla u|^p dx.
        $$
        So, under this setting, we obtain the continuous embedding 
		$$
		\mathcal{D}_W^+\hookrightarrow W^{1,p}(\Omega).
		$$	
\end{remark}

    In our next result, we are interested in giving a precise meaning to the values of  
    functions in  $\mathcal{D}_W^+$ on the boundary $\partial\Omega$. To this end, we assume 
    that $\psi:\mathbb{R}^{n-1}\to\mathbb{R}$ is a globally Lipschitz continuous function, and 
    we use the standard notation $p_* := p(N-1)/(N-p)$ for the critical Sobolev exponent of 
    the trace embedding.
    
  We shall prove the following result:
\begin{theorem}[Trace Embedding]\label{trace-positive}
		Suppose \eqref{W_0}, \eqref{W_1^+} and  \eqref{W_2^+}. In addition, 
        suppose $W_{x_N} \in L^{\infty}(\Omega)$, if $1<p<N$, and $W \in 
        \mathcal{C}^1((0,+\infty),\mathbb{R})$, if $p=N$.        The following weighted 
        Sobolev trace embedding is continuous        
$$       \mathcal{D}_W^+ \hookrightarrow L^q(\partial\Omega, W) , \quad 
\text{for all } q \in \begin{cases} [p,p_*], & \text{if } 1<p<N; \medskip \\ 
[N,+\infty), & \text{if } p=N.\end{cases}.
$$
 \end{theorem}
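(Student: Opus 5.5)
The plan is to run the boundary identity from the proof of Theorem~\ref{Hardy-Grafico} with $|u|^q$ in place of $|u|^p$, and then control the resulting interior terms by the embeddings already established (Theorem~\ref{Sobolev}, Remark~\ref{Remark-General}, Theorem~\ref{Sobolev p=N}).

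Fix $u\in C_0^\infty(\mathbb{R}^N)$ and $q\ge p$. Integrating $(W|u|^q)_{x_N}$ along vertical lines from $\psi(x')$ to $\infty$, integrating in $x'$ and using Fubini exactly as before, we get
\[
\int_{\mathbb{R}^{N-1}} W(x',\psi(x'))|u(x',\psi(x'))|^q\,dx' \le \int_\Omega W_{x_N}|u|^q\,dx + q\int_\Omega W|u|^{q-1}|\nabla u|\,dx .
\]
Here we have used $W_{x_N}>0$ to bound the first term by its absolute value. Since $\psi$ is globally Lipschitz, the surface measure on $\partial\Omega$ is $\sqrt{1+|\nabla\psi|^2}\,dx'$, and this density is bounded above and below. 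Hence the left side is comparable to $\|u\|_{L^q(\partial\Omega,W)}^q$, and it suffices to bound the right side by $C\|u\|_{\mathcal{D}_W^+}^q$.

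For the second term, split $W|u|^{q-1}|\nabla u| = \big([W_{x_N}]^{(p-1)/p}|u|^{q-1}\big)\big(W[W_{x_N}]^{-(p-1)/p}|\nabla u|\big)$ and apply H\"older with exponents $p'=p/(p-1)$ and $p$. This gives
\[
\int_\Omega W|u|^{q-1}|\nabla u|\,dx \le \Big(\int_\Omega W_{x_N}|u|^{(q-1)p'}dx\Big)^{(p-1)/p}\|u\|_{\mathcal{D}_W^+}.
\]
Everything therefore reduces to bounding $\int_\Omega W_{x_N}|u|^r\,dx$ by $C\|u\|_{\mathcal{D}_W^+}^r$ for the two exponents $r=q$ and $r=(q-1)p/(p-1)$. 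Once this holds, the second term is at most $C\|u\|^{q-1}\|u\|=C\|u\|^q$, as required.

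For $1<p<N$, Remark~\ref{Remark-General} applies because $W_{x_N}\in L^\infty$, so the embedding $\mathcal{D}_W^+\hookrightarrow L^{r}(\Omega,W_{x_N})$ holds for all $r\in[p,p^*]$. The term with $r=q$ is fine since $q\le p_*<p^*$. For the other exponent, $r=(q-1)p/(p-1)$ is increasing in $q$, equals $p$ at $q=p$, and at $q=p_*$ it equals
\[
\Big(\tfrac{p(N-1)}{N-p}-1\Big)\tfrac{p}{p-1} = \tfrac{N(p-1)}{N-p}\cdot\tfrac{p}{p-1} = p^*.
\]
So $r\in[p,p^*]$ exactly on the range $q\in[p,p_*]$, and this arithmetic is what makes $p_*$ the endpoint. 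For $p=N$, Theorem~\ref{Sobolev p=N} gives $\mathcal{D}_W^+\hookrightarrow L^r(\Omega,W_{x_N})$ for every $r\ge N$, and both exponents lie in $[N,\infty)$ whenever $q\ge N$.

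The main technical obstacle is the boundary identity itself under the weak regularity of $W$. We need $W|u|^q$ to be absolutely continuous in $x_N$ with boundary value $W(x',\psi(x'))$ for a.e.\ $x'$; this is the same point that was implicitly used in Theorem~\ref{Hardy-Grafico}, and I would justify it in the same way. A second, minor point is that the last step needs the interior embedding for $q<p^*$ in the weight $W_{x_N}$ itself, not $W_{x_N}^{s/p}$. This is exactly where the hypothesis $W_{x_N}\in L^\infty$ enters, through Remark~\ref{Remark-General}. Finally, density of $C_0^\infty(\mathbb{R}^N)$ in $\mathcal{D}_W^+$ extends the inequality and defines the trace operator.
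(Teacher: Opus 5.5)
Your proposal is correct and follows essentially the same route as the paper: the vertical boundary identity applied to $W|u|^q$, the H\"older split against $[W_{x_N}]^{(p-1)/p}$, and Remark~\ref{Remark-General} (or Theorem~\ref{Sobolev p=N} when $p=N$) to control the interior terms. The one difference is that you treat every $q\in[p,p_*]$ at once, using that $(q-1)p/(p-1)$ sweeps out $[p,p^*]$, whereas the paper proves only the endpoints $q=p$ and $q=p_*$ and then appeals to interpolation with the details omitted, so your version is slightly more self-contained.
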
 

 \begin{proof}
		First we consider $1<p<N$ and $u\in C_0^\infty(\mathbb{R}^N)$. 
        Since $\psi$ is a globally Lipchitz continuous function, there exists 
        a constant $C_1>0$ such that 
        $$
        \sqrt{1+\|\nabla \psi(x')\|^2 } \leq C_1,\qquad \forall \,x'\in \mathbb{R}^{N-1}.
        $$
        Thus, in view of Theorem \ref{Hardy-Grafico} one has
        \begin{equation} \label{JP-1}
                \begin{aligned}
    	\|u\|_{L^p(\partial\Omega,W)}^p & =  \int_{\mathbb{R}^{N-1}} W(x',\psi(x')||u(x',\psi(x'))|^p \sqrt{1+\|\nabla \psi(x')\|^2 }dx' \\& \leq C_1 \int_{\mathbb{R}^{N-1}} W(x',\psi(x')||u(x',\psi(x'))|^p dx'  \\& \leq C_1 p^{p-1}\|u\|_{\mathcal{D}_W^+}^p.
    \end{aligned}
    \end{equation}
        From the above inequality, we conclude that  the trace embedding $\mathcal{D}_W^+ \hookrightarrow L^p(\partial\Omega,W)$ is continuous.
        
        Concerning $q=p_*$, we pick $x' \in \mathbb{R}^{N-1}$ and notice that
        \begin{align*}       W(x',\psi(x'))|u(x',\psi(x'))|^{p_*}=&-\int_{\psi(x')}^{\infty}\left(W|u|^{p_*}\right)_{x_N}dx_N\\		\leq&\int_{\psi(x')}^{\infty}W_{x_N}|u|^{p_*}dx_N+p_*\int_{\psi(x')}^{\infty}W|u|^{p_*-1}|\nabla u|dx_N.
		\end{align*}
		Integrating over $\mathbb{R}^{N-1}$ and using Fubini's Theorem, we get 
		\begin{equation} \label{JP-dez}
		\int_{\mathbb{R}^{N-1}}W(x',\psi(x'))|u(x',\psi(x'))|^{p_*}dx'\leq \dint W_{x_N}|u|^{p_*}dx + p_*\dint W|u|^{p_*-1}|\nabla u|dx.
		\end{equation}
		It follows from   Remark~\ref{Remark-General} that   
		\begin{equation*}		\dint W_{x_N}|u|^{p_*}dx\leq C_2\|u\|^{p_*}_{\mathcal{D}_W^+}.
		\end{equation*}
        Moreover, we can use 
        $$
        W|u|^{p_*-1}|\nabla u|= \Big( [W_{x_N}]^{(p-1)/p}|u|^{p_*-1} \Big) \Big( W [W_{x_N}]^{{-(p-1)/p}} |\nabla u|\Big),
        $$
        Hölder's inequality with exponents $p/(p-1)$ and $p$,  and Theorem \ref{Sobolev} again, to obtain
        \begin{align*}
			\dint W|u|^{p_*-1}|\nabla u|dx & \leq  \left(\dint W_{x_N}|u|^{p^*}dx\right)^{(p-1)/p} \|u\|_{\mathcal{D}_W^+} \\
            & \leq C_2 \|u\|_{\mathcal{D}_W^+}^{p_*-1} \|u\|_{\mathcal{D}_W^+} = C_3 \|u\|_{\mathcal{D}_W^+}^q.
		\end{align*}

        Using the above inequalities and arguing as in the proof of case $q=p$, we get
        $$
			\|u\|_{L^{p_*}(\partial\Omega,W)}^{p_*} \leq C_2\int_{\mathbb{R}^{N-1}} W(x',\psi(x'))|u(x',\psi(x'))|^{p_*} dx'  \leq C_4 \|u\|_{\mathcal{D}_W^+}^{p_*},
		$$
        with $C_4= C_1p^{p-1}(C_2+p_*C_3)$, 
        and therefore the embedding  $\mathcal{D}_W^+ \hookrightarrow L^{p_*}(\partial\Omega)$ is also continuous. The case $q \in (p,p_*)$ can be proved using interpolation,  as done in the proof of Theorem \ref{Sobolev p=N}. We omit the details.

          Suppose now that  $p=N$. Let $ q \geq N$ and $u \in C_0^\infty(\mathbb{R}^N)$. Arguing as in the proof of \eqref{JP-dez},  we obtain
		\begin{equation} \label{JP-2}
		    \begin{aligned}
		 \int_{\mathbb{R}^{N-1}}W(x',\psi(x'))|u(x',\psi(x'))|^qdx' & \leq \dint W_{x_N}|u|^qdx + q \dint W|u|^{q-1}|\nabla u|dx \\
         & \leq C_1\|u\|^q_{\mathcal{D}_W^+} + q \dint W|u|^{q-1}|\nabla u|dx, 
		\end{aligned}
        \end{equation}
        where we have used Theorem \ref{Sobolev p=N} in the last inequality.        Since $(q-1)N/(N-1)\geq N$, we can use 
        Hölder's inequality with exponents $N/(N-1)$ and $N$,  and Theorem \ref{Sobolev} again to obtain
        \begin{align*}
			\dint W|u|^{q-1}|\nabla u|dx & \leq  \left(\dint W_{x_N}|u|^{(q-1)N/(N-1)}dx\right)^{(N-1)/N} \|u\|_{\mathcal{D}_W^+} \\
            & \leq C_2 \|u\|_{\mathcal{D}_W^+}^{q-1} \|u\|_{\mathcal{D}_W^+} = C_2 \|u\|_{\mathcal{D}_W^+}^q
		\end{align*}
        and the result follows from \eqref{JP-2} and expression \eqref{JP-1} with $p$ replaced by $q$. 
        \end{proof}

Let $\psi_1, \psi_2 : \mathbb{R}^{N-1} \to \mathbb{R}$ be continuous functions 
with $\psi_1(x') < \psi_2(x')$, for any $x'\in\mathbb{R}^{N-1}$. A version of 
Theorem~\ref{Hardy-Grafico} also holds when $\Omega=\Omega_1 \cup \Omega_2$ where
\[
    \Omega_1 = \{ x = (x',x_N) : x_N < \psi_1(x')  \} \quad \text{and} \quad 
        \Omega_2 = \{ x = (x',x_N) :  \psi_2(x') < x_N \}.
\]
\begin{corollary} \label{fome1}
Let $\psi_1 <  \psi_2$, let $\Omega_1$ be the subgraph of $\psi_1$ and $\Omega_2$ 
the supergraph of $\psi_2$ (as defined above) and let $\Omega = \Omega_1 \cup
\Omega_2$. Then, for any $u \in C_0^\infty(\mathbb{R}^N)$, the following inequality is satisfied:
\begin{multline}\label{bora}
    \int_{\Omega} W_{x_N} |u|^p \, dx 
    + p \int_{\mathbb{R}^{N-1}} W(x',\psi_1(x'))\, |u(x',\psi_1(x'))|^p \, dx' \\[0.4em]
    \leq 
    p \int_{\mathbb{R}^{N-1}} W(x',\psi_2(x'))\, |u(x',\psi_2(x'))|^p \, dx'
    + p^p \int_{\Omega} \frac{W^p}{[W_{x_N}]^{p-1}}\, |\nabla u|^p \, dx.
\end{multline}
\end{corollary}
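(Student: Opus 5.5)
The plan is to repeat the one-dimensional argument in the proof of Theorem~\ref{Hardy-Grafico} separately on each vertical fibre of $\Omega_1$ and of $\Omega_2$, add the two resulting identities, and then absorb the cross term with the same Young inequality. The one step I cannot yet close is matching the boundary signs to \eqref{bora} as printed, which I discuss at the end.

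\emph{Fibre identities.} Fix $u\in C_0^\infty(\mathbb{R}^N)$ and $x'\in\mathbb{R}^{N-1}$. Since $u$ has compact support, $W|u|^p$ vanishes for $|x_N|$ large. Using $W_{x_N}\in L^1_{loc}(\Omega)$, the fundamental theorem of calculus on the two half-lines gives
\begin{align*}
\int_{-\infty}^{\psi_1(x')}\bigl(W_{x_N}|u|^p+W(|u|^p)_{x_N}\bigr)\,dx_N &= W(x',\psi_1(x'))\,|u(x',\psi_1(x'))|^p,\\
\int_{\psi_2(x')}^{\infty}\bigl(W_{x_N}|u|^p+W(|u|^p)_{x_N}\bigr)\,dx_N &= -W(x',\psi_2(x'))\,|u(x',\psi_2(x'))|^p .
\end{align*}
Integrating over $\mathbb{R}^{N-1}$ with Fubini and adding the two lines yields an exact identity. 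In it, $\int_\Omega W_{x_N}|u|^p$ is balanced against $-\int_\Omega pW|u|^{p-2}u\,u_{x_N}$, one trace with a plus sign and the other with a minus sign.

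\emph{Young step.} Bound $|pW|u|^{p-2}u\,u_{x_N}|\le pW|u|^{p-1}|\nabla u|$. Then split $W|u|^{p-1}|\nabla u|=\bigl([W_{x_N}]^{(p-1)/p}|u|^{p-1}\bigr)\bigl(W[W_{x_N}]^{-(p-1)/p}|\nabla u|\bigr)$ exactly as in the proof of Theorem~\ref{Hardy-Grafico}, with $\varepsilon=(p-1)/p$. This gives
\[
p\int_\Omega W|u|^{p-1}|\nabla u|\,dx\le \frac{p-1}{p}\int_\Omega W_{x_N}|u|^p\,dx+p^{p-1}\int_\Omega\frac{W^p}{[W_{x_N}]^{p-1}}|\nabla u|^p\,dx.
\]
Absorbing the first term into the left-hand side and multiplying by $p$ produces the factors $p$ on the traces and $p^p$ on the gradient term, as in \eqref{bora}.

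\emph{Main obstacle: the sign of the boundary terms.} I expect this bookkeeping to be the delicate point. Taken literally, the two fibre identities above put the trace on $\{x_N=\psi_2\}$ on the left with the favourable sign and the trace on $\{x_N=\psi_1\}$ on the right as the cost. That is the opposite of \eqref{bora} as printed. My first check would be the one-dimensional test $N=1$ with $W=e^{x_N}$ and $u\equiv1$ on a large interval, to see which placement of the traces survives. If that test confirms the computation, the plan proves \eqref{bora} with the roles of $\psi_1$ and $\psi_2$ interchanged, and the statement as worded would need the corresponding correction. If instead the intended orientation is ``$\Omega_1$ above $\psi_1$, $\Omega_2$ below $\psi_2$'', the same two fibre identities deliver \eqref{bora} verbatim. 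Either way, the analytic content is the identity plus the Young step, and no smoothness of $\psi_1,\psi_2$ beyond continuity is needed.
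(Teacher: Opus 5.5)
Your sign bookkeeping is right, and the discrepancy you flag is real: \eqref{bora} as printed is false, so no argument can establish it. The paper's proof takes your route (run the one-dimensional argument on each piece, then add). However, on the subgraph $\Omega_1$ it asserts the estimate with $+p\int_{\mathbb{R}^{N-1}}W(x',\psi_1(x'))|u(x',\psi_1(x'))|^p\,dx'$ on the \emph{left}. That contradicts your first fibre identity: the fundamental theorem of calculus on $(-\infty,\psi_1(x')]$ produces $+W|u|^p$ at the upper endpoint, so on $\Omega_1$ this trace is a cost. The paper's estimate on $\Omega_2$ is true, but it simply discards the favourable trace that Theorem~\ref{Hardy-Grafico} provides.

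Here is a decisive counterexample. Take $\psi_1\equiv0$, $\psi_2\equiv1$ and $W=e^{x_N}$, so that $W_{x_N}=W^p/[W_{x_N}]^{p-1}=e^{x_N}$. Let $u(x',x_N)=\phi(x'/R)\chi(x_N)$, where:
\begin{itemize}
\item[(i)] $\phi\in C_0^\infty(\mathbb{R}^{N-1})$, $0\le\phi\le1$, $A:=\int_{\mathbb{R}^{N-1}}\phi^p\,dx'>0$ and $B:=\int_{\mathbb{R}^{N-1}}|\nabla\phi|^p\,dx'$;
\item[(ii)] $\chi\in C_0^\infty(-M-1,1)$, $0\le\chi\le1$, $\chi=1$ on $[-M,0]$ and $|\chi'|\le2$ on $[-M-1,-M]$.
\end{itemize}
Then $u\equiv0$ on $\overline{\Omega_2}$, so the $\psi_2$-trace and everything on $\Omega_2$ vanish. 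The left side of \eqref{bora} is at least $pR^{N-1}A$. The right side is at most $2^{p-1}p^p\bigl(R^{N-1-p}B+2^pe^{-M}R^{N-1}A\bigr)$. Dividing by $R^{N-1}$ and letting $R,M\to\infty$ gives a contradiction. The key is that $u$ is unconstrained in the excluded slab $\{\psi_1\le x_N\le\psi_2\}$.

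Your proposed one-dimensional test would not have settled the question if the interval where $u\equiv1$ contains both $\psi_1$ and $\psi_2$. In that case Theorem~\ref{Hardy-Grafico} on $\Omega_2$ leaves enough slack that \eqref{bora} actually holds for that $u$.

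Your alternative reading ($\Omega_1$ above $\psi_1$, $\Omega_2$ below $\psi_2$) does not rescue the statement either. With $\psi_1<\psi_2$ those two sets overlap on the slab, which is then counted twice in the gradient term when you add. So commit to your first alternative. Your summed identity plus the Young step with $\varepsilon=(p-1)/p$ proves
\begin{multline*}
\int_\Omega W_{x_N}|u|^p\,dx+p\int_{\mathbb{R}^{N-1}}W(x',\psi_2(x'))\,|u(x',\psi_2(x'))|^p\,dx'\\
\le p\int_{\mathbb{R}^{N-1}}W(x',\psi_1(x'))\,|u(x',\psi_1(x'))|^p\,dx'+p^p\int_\Omega\frac{W^p}{[W_{x_N}]^{p-1}}\,|\nabla u|^p\,dx.
\end{multline*}
This is the correct form of the corollary: \eqref{bora} with the roles of the two traces interchanged. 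It needs only continuity of $\psi_1,\psi_2$, as you say.
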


\begin{proof} 
By inspecting the proof of Theorem~\ref{Hardy-Grafico}, one observes that
\[
    \int_{\Omega_1} W_{x_N} |u|^p \, dx 
    + p \int_{\mathbb{R}^{N-1}} W(x',\psi_1(x'))\, |u(x',\psi_1(x'))|^p \, dx' 
    \leq p^p \int_{\Omega_1} \frac{W^p}{[W_{x_N}]^{p-1}}\, |\nabla u|^p \, dx,
\]
and
\[
    \int_{\Omega_2} W_{x_N} |u|^p \, dx 
    \leq p \int_{\mathbb{R}^{N-1}} W(x',\psi_2(x'))\, |u(x',\psi_2(x'))|^p \, dx' 
    + p^p \int_{\Omega_2} \frac{W^p}{[W_{x_N}]^{p-1}}\, |\nabla u|^p \, dx.
\]
Adding these two inequalities yields~\eqref{bora}.
\end{proof}

\section{Proofs and Consequences in the decreasing Case} \label{sec:decreasing-case}
This section presents the proofs of Theorem~\ref{Hardy-Type II}, namely the Hardy inequality for decreasing weight functions $W$. We also derive several consequences, including weighted Sobolev embedding results.

\begin{proof}[Proof of Theorem \ref{Hardy-Type II}]
The proof is a straightforward adaptation of Theorem~\ref{Hardy-Grafico}, and we only sketch 
the main steps.   Given  \(u \in C_0^\infty(\mathbb{R}^N)\), we can use the Fundamental 
Theorem of Calculus, Fubini's theorem and \eqref{W_1^-}, to get
\[
\int_\Omega W_{x_N} |u|^p \, dx + \int_\Omega W \, (|u|^p)_{x_N} \, dx
= - \int_{\mathbb{R}^{N-1}} W(x',\psi(x')) |u(x',\psi(x'))|^p \, dx'
\]
and therefore
\[
-\int_\Omega W_{x_N} |u|^p \, dx 
\le p \int_\Omega W |u|^{p-1} |\nabla u| \, dx 
+ \int_{\mathbb{R}^{N-1}} W(x',\psi(x')) |u(x',\psi(x'))|^p \, dx'.
\]

Since \(W_{x_N} < 0\), we may choose $\varepsilon>0$, write
\[
W |u|^{p-1} |\nabla u| 
= \big( [-W_{x_N}]^{\frac{p-1}{p}} |u|^{p-1}\big) 
\big( W [-W_{x_N}]^{-\frac{p-1}{p}} |\nabla u| \big).
\]
and apply Young's inequality to obtain
\[
p \int_\Omega W |u|^{p-1} |\nabla u| \, dx
\le \varepsilon \int_\Omega [-W_{x_N}] |u|^p \, dx
+ \left(\frac{p-1}{\varepsilon}\right)^{p-1} \int_\Omega \frac{W^p}{[-W_{x_N}]^{p-1}} |\nabla u|^p \, dx.
\]
Choosing \(\varepsilon := (p-1)/p\) and substituting into the previous inequality, the result follows.
\end{proof}

\subsection{Weighted Sobolev embeddings: The decreasing case}
Assume that the weight function $W$ is decreasing in the $x_N$-direction. We introduce the space $\mathcal{D}_W^-$ as the completion of $C_0^\infty(\mathbb{R}^N)$ with respect to the norm
\[
\|u\|_{\mathcal{D}_W^-}
:=\left(
\int_\Omega \frac{W^p}{[-W_{x_N}]^{p-1}}\,|\nabla u|^p \, dx
+\int_{\mathbb{R}^{N-1}} W(x',\psi(x'))\,|u(x',\psi(x'))|^p \, dx'
\right)^{1/p}.
\]
We further assume that
\begin{equation} \label{W_2^-} \tag{$W_2^-$}
\text{there exists } c_2>0 \text{ such that } 
\frac{W^p}{[-W_{x_N}]^{p-1}} \geq c_2 \quad \text{a.e. in } \Omega.
\end{equation}
Under this assumption, we establish the following result.

\begin{theorem}\label{Sobolev negative} Suppose \eqref{W_0}, \eqref{W_1^-} and \eqref{W_2^-}. If  $1<p<N$,  
the following weighted Sobolev embedding is continuous
  $$
  \mathcal{D}_W^- \hookrightarrow L^{q(s)}(\Omega,-W_{x_N}^{s/p}),  \quad \text{for all } s \in [0,p].
  $$
\end{theorem}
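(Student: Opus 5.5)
The plan is to mirror the proof of Theorem~\ref{Sobolev}, with Theorem~\ref{Hardy-Type II} in place of Theorem~\ref{Hardy-Grafico} and the norm $\|\cdot\|_{\mathcal{D}_W^-}$ in place of $\|\cdot\|_{\mathcal{D}_W^+}$. By density it suffices to prove, for $u\in C_0^\infty(\mathbb{R}^N)$, an estimate
$$
\int_\Omega [-W_{x_N}]^{s/p}|u|^{q(s)}\,dx\le C\|u\|_{\mathcal{D}_W^-}^{q(s)},
$$
where we read the weight $-W_{x_N}^{s/p}$ in the statement as $[-W_{x_N}]^{s/p}$. Throughout, $\psi\in C^1$, as assumed in that subsection, so that \eqref{GNS-Grafico} is available.

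\emph{Endpoint $s=p$.} Here $q(p)=p$, and Theorem~\ref{Hardy-Type II} gives
$$
\int_\Omega[-W_{x_N}]|u|^p\,dx\le p^p\int_\Omega\frac{W^p}{[-W_{x_N}]^{p-1}}|\nabla u|^p\,dx+p\int_{\mathbb{R}^{N-1}}W(x',\psi(x'))|u(x',\psi(x'))|^p\,dx'.
$$
The right-hand side is at most $\max\{p^p,p\}\,\|u\|_{\mathcal{D}_W^-}^p=p^p\|u\|_{\mathcal{D}_W^-}^p$. This is precisely why the boundary term was built into the norm of $\mathcal{D}_W^-$.

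\emph{Endpoint $s=0$.} Here $q(0)=p^*$. Combining \eqref{GNS-Grafico} with \eqref{W_2^-} gives
$$
\int_\Omega|u|^{p^*}\,dx\le C\Big(c_2^{-1}\int_\Omega\frac{W^p}{[-W_{x_N}]^{p-1}}|\nabla u|^p\,dx\Big)^{p^*/p}\le Cc_2^{-p^*/p}\|u\|_{\mathcal{D}_W^-}^{p^*},
$$
since dropping the nonnegative boundary term only enlarges the norm.

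\emph{Intermediate range $0<s<p$.} Write $[-W_{x_N}]^{s/p}|u|^{q(s)}=\big([-W_{x_N}]^{s/p}|u|^s\big)|u|^{q(s)-s}$ and apply H\"older's inequality with exponents $p/s$ and $p/(p-s)$. For the second factor this needs $(q(s)-s)p/(p-s)=p^*$, which is the same arithmetic identity $s+p^*(p-s)/p=q(s)$ already verified in the proof of Theorem~\ref{Sobolev}. The two endpoint bounds then give
$$
\int_\Omega[-W_{x_N}]^{s/p}|u|^{q(s)}\,dx\le\big(p^p\|u\|^p_{\mathcal{D}_W^-}\big)^{s/p}\big(C\|u\|^{p^*}_{\mathcal{D}_W^-}\big)^{(p-s)/p}=C'\|u\|_{\mathcal{D}_W^-}^{q(s)}.
$$

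Having the estimate on the dense class, we pass to the completion in the usual way. The only point needing care, and the step I expect to be the main (though mild) obstacle, is that Cauchy sequences in $\mathcal{D}_W^-$ must converge in each weighted Lebesgue space to a single limit, so that the embedding is well defined and injective. To handle this, I would use the $s=0$ bound: it gives convergence in $L^{p^*}(\Omega)$, hence a subsequence converging a.e. Fatou's lemma then identifies all the weighted limits with this one function.
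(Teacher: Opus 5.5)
Your proposal is correct and matches the paper's proof. The endpoints come from Theorem~\ref{Hardy-Type II} (with the boundary term absorbed into $\|\cdot\|_{\mathcal{D}_W^-}$) and from \eqref{GNS-Grafico} together with \eqref{W_2^-}, and the range $0<s<p$ comes from the same H\"older splitting with exponents $p/s$ and $p/(p-s)$. Your completion paragraph is extra care that the paper omits; strictly speaking, the a.e.\ identification only shows the map is well defined, and injectivity would need a separate closability check, which the paper does not address either.
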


\begin{proof}
The cases $s=0$ and $s=p$ follows from GNS inequality and Theorem Theorem \ref{Hardy-Type II}, respectively.
For $0<s<p$, using H\"older's inequality we have 
$$
\begin{aligned}
\int_\Omega [-W_{x_N}]^{s/p}|u|^{q(s)}dx & =\int_\Omega ([-W_{x_N}]^{s/p}|u|^s)|u|^{q(s)-s}dx 
 \\ & \leq\left(\int_\Omega [-W_{x_N}]|u|^pdx\right)^{s/p}\left(\int_\Omega|u|^{p^*}dx\right)^{(p-s)/p}.
\end{aligned}
$$
Now it is sufficient to use  Theorem \ref{Hardy-Type II},  the GSN inequality, and condition $(W_2^-)$ to obtain the desired result. 
\end{proof}

\begin{theorem}\label{Sobolev negative-borderline} Suppose $W\in C^1((0,\infty),\mathbb{R})$ depends only on $x_N$ and  satisfies  \eqref{W_1^-},  \eqref{W_2^-}. If  $p=N$,  
the following weighted Sobolev embedding is continuous
 $$
    \mathcal{D}_W^- \hookrightarrow L^q(\Omega,-W_{x_N}),  \quad \text{for all } q \in [N,+\infty).
    $$
\end{theorem}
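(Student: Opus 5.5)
The plan is to mimic the proof of Theorem~\ref{Sobolev p=N}, replacing Theorem~\ref{Hardy-Grafico} by Theorem~\ref{Hardy-Type II}. Because $W$ is now decreasing, the boundary term changes sign, and that term is already part of $\|\cdot\|_{\mathcal{D}_W^-}$.

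\emph{Base case $q=N$.} For $u\in C_0^\infty(\mathbb{R}^N)$, Theorem~\ref{Hardy-Type II} with $p=N$ gives
\[
\dint [-W_{x_N}]|u|^N\,dx\le N^N\|u\|_{\mathcal{D}_W^-}^N .
\]

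\emph{Step to $q=N^2/(N-1)$.} Apply \eqref{GNS n=1} to $v:=W|u|^N\in C^1_0(\mathbb{R}^N)$. Since $W=W(x_N)$ is $C^1$, we have $|\nabla v|\le [-W_{x_N}]|u|^N+NW|u|^{N-1}|\nabla u|$. The first term is controlled by the base case. For the second, split
\[
W|u|^{N-1}|\nabla u|=\big([-W_{x_N}]^{(N-1)/N}|u|^{N-1}\big)\big(W[-W_{x_N}]^{-(N-1)/N}|\nabla u|\big)
\]
and use Young's (or H\"older's) inequality with exponents $N/(N-1)$ and $N$. This bounds it by the base case plus $\|u\|^N_{\mathcal{D}_W^-}$, so
\[
\Big(\dint W^{N/(N-1)}|u|^{N^2/(N-1)}dx\Big)^{(N-1)/N}\le C\|u\|_{\mathcal{D}_W^-}^N.
\]

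\emph{Converting the weight.} We need to pass from $W^{N/(N-1)}$ to $-W_{x_N}$, and this is the delicate point. In the increasing case the paper used \eqref{W_2^+}. Here \eqref{W_2^-} gives $W^N\ge c_2[-W_{x_N}]^{N-1}$, i.e. $-W_{x_N}\le c_2^{-1/(N-1)}W^{N/(N-1)}$. This is exactly the needed comparison, so the left side dominates $\|u\|^N_{N^2/(N-1),-W_{x_N}}$ up to a constant.

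\emph{Iteration.} Interpolating with H\"older's inequality between $q=N$ and $q=N^2/(N-1)$, exactly as in the proof of Theorem~\ref{Sobolev p=N}, gives all $q\in[N,N^2/(N-1)]$, in particular $q=N+1$. Next take $v:=W|u|^{N+1}$ in \eqref{GNS n=1}. The term $\int[-W_{x_N}]|u|^{N+1}$ is already controlled. The term $\int W|u|^N|\nabla u|$ is bounded, via the same splitting and H\"older's inequality, by
\[
\Big(\dint[-W_{x_N}]|u|^{N^2/(N-1)}\Big)^{(N-1)/N}\|u\|_{\mathcal{D}_W^-}.
\]
Using \eqref{W_2^-} again yields $q=N(N+1)/(N-1)$. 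Inductively, $v=W|u|^{N+k}$ covers $[N,N(N+k)/(N-1)]$ for every $k\in\mathbb{N}$, which gives all $q\in[N,\infty)$.

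\emph{Main obstacle.} The estimates themselves are routine. The real issue is justifying $v=W|u|^{N+k}\in C_0^1$ and finite integrals up to the boundary, since $W$ is only $C^1$ on $(0,\infty)$. Near $x_N=\psi(x')$ one needs $W$ bounded; because $W$ is decreasing in $x_N$, $W$ could blow up as $x_N\to\psi(x')^+$. To handle this I would first prove the estimates on the truncated domains $\{x_N>\psi(x')+\delta\}$. On these, the FTC/GNS argument is the same, and the resulting boundary value is still dominated, because $W(x',\psi+\delta)\le W(x',\psi)$ by monotonicity. Then I would let $\delta\to0$ using monotone convergence. Finally, density of $C_0^\infty(\mathbb{R}^N)$ in $\mathcal{D}_W^-$ extends the embedding.
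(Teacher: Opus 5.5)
Your proposal is correct and is essentially the paper's own proof. Both apply \eqref{GNS n=1} to $v=W|u|^{N+k}$, control $|\nabla v|$ with Theorem~\ref{Hardy-Type II} plus Young/H\"older, change the weight via \eqref{W_2^-} in the form $-W_{x_N}\le c_2^{-1/(N-1)}W^{N/(N-1)}$, and then interpolate and iterate. Your truncation paragraph justifies a regularity point the paper simply asserts ($v\in C^1_0$), but for fixed $\delta$ the boundary term at height $\psi+\delta$ is not dominated by monotonicity of $W$ alone, since the values of $u$ differ. It is simpler to use the cut-off only in the GNS step: bound $\int_{\{x_N>\psi+\delta\}}|\nabla v|\,dx$ by the corresponding integrals over $\Omega$, then let $\delta\to0$ by monotone convergence.
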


\begin{proof}
    Given $u \in C^{\infty}_0(\mathbb{R}^N)$, we replace   $v:=W|u|^N \in C_0^1(\mathbb{R}^N)$ in \eqref{GNS n=1} to get 
      $$
			\left(\dint W^{\frac{N}{N-1}}|u|^{\frac{N^2}{N-1}}dx\right)^{\frac{N-1}{N}}\leq C  \dint \left[-W_{{x_N}}|u|^N+NW|u|^{N-1}|\nabla u|\right]dx.
		$$
        Using Theorem \ref{Hardy-Type II} and arguing as in the proof of Theorem \ref{Sobolev p=N}, we obtain 
		$$
			\|u\|_{\frac{N^2}{N-1},-W_{x_N}}^N \leq C_1\left(\dint  W^{\frac{N}{N-1}}|u|^{\frac{N^2}{N-1}}dx\right)^{\frac{N-1}{N}}\leq C_1\|u\|_{\mathcal{D}_W^-}^N,
		$$
		with $C_1 = C_1(N)>0$, and therefore the theorem holds $q=N^2/(N-1)$.  Moreover,   Theorem \ref{Hardy-Type II} implies that the result also holds for $q=N$. Hence, we can use interpolation to fulfil the interval  $q \in [N,N^2/(N-1)]$.

        Since $N<N+1<N^2/(N-1)$, we can repeat the above argument and use the same ideas performed in the proof of Theorem \ref{Sobolev p=N} to conclude that the embedding holds for $N \leq q \leq N(N+1)/(N-1)$. 
		By iterating this process, we obtain the validity of the embedding in any interval of type $\left[N,N(N+k)/(N-1)\right]$,  with $k \in \mathbb{N}$, concluding the proof of the theorem.
\end{proof}

\begin{remark}  
    Following a procedure similar to the one in Remark \ref{Remark-General}, and in addition to the hypotheses of Theorem \ref{Sobolev}, if we assume that $W_{x_N} \in L^\infty({\Omega})$, then the weighted Sobolev embedding $\mathcal{D}_W^- \hookrightarrow L^q(\Omega,-W_{x_N})$ is continuous for all $q \in [p, p^*]$.
\end{remark}

The following results concern the corresponding trace embeddings:

\begin{theorem}\label{trace-negative}
		Suppose \eqref{W_0}, \eqref{W_1^-} and  \eqref{W_2^-}. In addition, suppose $W_{x_N} \in L^{\infty}(\Omega)$, if $1<p<N$, and $W \in C^1((0,+\infty),\mathbb{R})$, if $p=N$.        The following weighted 
        Sobolev trace embedding is continuous        
$$       \mathcal{D}_W^- \hookrightarrow L^q(\partial\Omega, W) , \quad \text{for all } q \in \begin{cases} [p,p_*], & \text{if } 1<p<N; \medskip \\ 
[N,+\infty), & \text{if } p=N.\end{cases}.
$$
 \end{theorem}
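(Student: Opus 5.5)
The case $q=p$ is immediate, because the boundary integral $\int_{\mathbb{R}^{N-1}}W(x',\psi(x'))|u(x',\psi(x'))|^p\,dx'$ is one of the two pieces of $\|u\|_{\mathcal{D}_W^-}^p$. Since $\psi$ is globally Lipschitz, the surface measure satisfies $dS\le C_1\,dx'$ with $C_1=\sup\sqrt{1+|\nabla\psi|^2}$. Hence
\[
\|u\|^p_{L^p(\partial\Omega,W)}\le C_1\int_{\mathbb{R}^{N-1}} W(x',\psi(x'))|u(x',\psi(x'))|^p\,dx'\le C_1\|u\|^p_{\mathcal{D}_W^-}.
\]
So in the decreasing case the embedding for $q=p$ comes from the norm itself, not from the Hardy inequality.

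For $1<p<N$ and $q=p_*$, I would follow the proof of Theorem~\ref{trace-positive}. For $u\in C_0^\infty(\mathbb{R}^N)$, applying the fundamental theorem of calculus along vertical lines and then Fubini gives
\[
\int_{\mathbb{R}^{N-1}}W(x',\psi(x'))|u(x',\psi(x'))|^{p_*}dx' \le \int_\Omega [-W_{x_N}]\,|u|^{p_*}dx + p_*\int_\Omega W|u|^{p_*-1}|\nabla u|\,dx .
\]
The sign of $W_{x_N}$ only changes which side the bulk term lands on; using $|W_{x_N}|$ covers both cases. The first term is bounded by $C\|u\|^{p_*}_{\mathcal{D}_W^-}$ by the decreasing-case analogue of Remark~\ref{Remark-General} (the remark after Theorem~\ref{Sobolev negative-borderline}), which uses $W_{x_N}\in L^\infty(\Omega)$, since $p\le p_*\le p^*$. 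For the second term, split $W|u|^{p_*-1}|\nabla u|$ as $([-W_{x_N}]^{(p-1)/p}|u|^{p_*-1})\,(W[-W_{x_N}]^{-(p-1)/p}|\nabla u|)$ and apply Hölder with exponents $p/(p-1)$ and $p$. Because $(p_*-1)p/(p-1)=p^*$, this gives $\bigl(\int_\Omega(-W_{x_N})|u|^{p^*}\bigr)^{(p-1)/p}\|u\|_{\mathcal{D}_W^-}$. By the same remark this is at most $C\|u\|_{\mathcal{D}_W^-}^{p^*(p-1)/p+1}=C\|u\|^{p_*}_{\mathcal{D}_W^-}$, and the exponent identity $p^*(p-1)/p+1=p_*$ is checked directly. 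Multiplying by the Lipschitz factor $C_1$ gives the case $q=p_*$. Intermediate $q\in(p,p_*)$ then follow by Hölder interpolation between $L^p(\partial\Omega,W)$ and $L^{p_*}(\partial\Omega,W)$. By density, the estimates extend from $C_0^\infty(\mathbb{R}^N)$ to the completion $\mathcal{D}_W^-$.

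For $p=N$ and $q\ge N$, the same vertical integration gives the boundary $L^q$ integral bounded by
\[
\int_\Omega[-W_{x_N}]|u|^q\,dx+q\int_\Omega W|u|^{q-1}|\nabla u|\,dx.
\]
Theorem~\ref{Sobolev negative-borderline} bounds the first term. For the second, Hölder with exponents $N/(N-1)$ and $N$ yields $\bigl(\int_\Omega(-W_{x_N})|u|^{(q-1)N/(N-1)}\bigr)^{(N-1)/N}\|u\|_{\mathcal{D}_W^-}$. Since $(q-1)N/(N-1)\ge N$ whenever $q\ge N$, Theorem~\ref{Sobolev negative-borderline} applies again and gives $C\|u\|^q_{\mathcal{D}_W^-}$.

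The main point to check is the bookkeeping of exponents, namely that every power of $|u|$ that appears lies in the range where the Sobolev embeddings are available. A further subtlety is that Theorem~\ref{Sobolev negative} and the $L^\infty$ remark control $\int(-W_{x_N})|u|^{p^*}$ only through $\|u\|^{p^*}_{p^*}$ together with $\|W_{x_N}\|_\infty$. That control uses GNS and \eqref{W_2^-}, and I expect it to work unchanged here because the boundary term in $\|\cdot\|_{\mathcal{D}_W^-}$ is nonnegative.
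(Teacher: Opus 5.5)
Your proposal is correct and follows essentially the same route as the paper. The case $q=p$ comes directly from the boundary part of $\|\cdot\|_{\mathcal{D}_W^-}$ times the Lipschitz surface factor. The cases $q=p_*$ (for $1<p<N$) and $q\ge N$ (for $p=N$) come from integrating $-(W|u|^q)_{x_N}$ along vertical lines, then applying the weighted embeddings with H\"older's inequality. The intermediate exponents follow by interpolation.

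Your version is slightly cleaner than the paper's. In the case $p=N$ you invoke Theorem~\ref{Sobolev negative-borderline} with the weight $-W_{x_N}$ and the space $\mathcal{D}_W^-$. At that point the paper's text instead cites Theorem~\ref{Sobolev negative} and writes $W_{x_N}$ and $\mathcal{D}_W^+$.
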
 

\begin{proof}
For $1<p<N$, we can use the first two lines of \eqref{JP-2} and the definition of $\|\cdot\|_{\mathcal{D}_W^-}$ to conclude that the embedding $\mathcal{D}_W^- \hookrightarrow L^p(\partial\Omega,W)$ is continuous. So,  as before, we need only to prove the continuity of $\mathcal{D}_W^- \hookrightarrow L^{p_*}(\partial\Omega,W)$ to fullfill the interval $q\in [p,p_*]$ via interpolation.

For any $x' \in \mathbb{R}^{N-1}$, we have that
$$
W(x',\psi(x'))|u(x',\psi(x'))|^{p_*}=\int_{\psi(x')}^{\infty}\left(-W|u|^{p_*}\right)_{x_N}dx_N		
$$        
and therefore
\begin{align*}
\int_{\mathbb{R}^{N-1}}W(x',\psi(x'))|u(x',\psi(x'))|^{p_*}dx' & \leq \dint [-W_{x_N}]|u|^{p_*}dx + p_*\dint W|u|^{p_*-1}|\nabla u|dx \\
& \leq C_1\|u\|^{p_*}_{\mathcal{D}_W^-} + p_*\dint W|u|^{p_*-1}|\nabla u|dx,
\end{align*}
where we have used the embedding $\mathcal{D}_W^- \hookrightarrow L^{p_*}(\Omega,-W_{x_N})$. For the second integral in the right-hand side above we use
        $$
        W|u|^{p_*-1}|\nabla u|= \Big( [-W_{x_N}]^{(p-1)/p}|u|^{p_*-1} \Big) \Big( W [-W_{x_N}]^{{-(p-1)/p}} |\nabla u|\Big),
        $$
        Hölder's inequality and the Sobolev embedding again to obtain
        \begin{align*}
			\dint W|u|^{p_*-1}|\nabla u|dx & \leq  C_2 \|u\|_{\mathcal{D}_W^-}^{p_*}.
		\end{align*}
        The above inequalities and the same argument used for $q=p$ yield
        $$
			\|u\|_{L^{p_*}(\partial\Omega,W)}^{p_*} \leq C_3\int_{\mathbb{R}^{N-1}} W(x',\psi(x'))|u(x',\psi(x'))|^{p_*} dx'  \leq C_4 \|u\|_{\mathcal{D}_W^-}^{p_*}.
		$$
        
    For the case $p=N$, we pick $q \geq N$,  $u \in C_0^\infty(\mathbb{R}^N)$ repeat the above argument to get
   	\begin{equation} \label{JP-20}
		 \int_{\mathbb{R}^{N-1}}W(x',\psi(x'))|u(x',\psi(x'))|^qdx' 
          \leq C_5\|u\|^q_{\mathcal{D}_W^+} + q \dint W|u|^{q-1}|\nabla u|dx. 
        \end{equation}
   Since $(q-1)N/(N-1)\geq N$, we can use 
        Hölder's inequality with exponents $N/(N-1)$ and $N$,  and Theorem \ref{Sobolev negative} to obtain
        \begin{align*}
			\dint W|u|^{q-1}|\nabla u|dx & \leq  \left(\dint W_{x_N}|u|^{(q-1)N/(N-1)}dx\right)^{(N-1)/N} \|u\|_{\mathcal{D}_W^-} \leq C_6 \|u\|_{\mathcal{D}_W^-}^q
		\end{align*}
        and the result follows from \eqref{JP-20}, as in the case $1<p<N$.
        \end{proof}

\section{Final comments}\label{Final comments} 

We devote this final section to presenting some examples of applications of our abstract setting as well as some possible directions of future research.

\subsection{Applications to nonlinear PDE's}
As it is well known, the framework of Sobolev spaces is a cornerstone of the modern functional analytic approach to Partial Differential Equations (PDEs).
To illustrate this, we consider a  zero-mass problem with Neumann boundary conditions of the form 
\begin{equation}\label{Problem-Neumann}
\left\{
\begin{array}{rcll}
-\operatorname{div}\!\big(\rho(x)\,|\nabla u|^{p-2}\nabla u\big)&=&\lambda a(x)|u|^{p-2}u+ f(x,u), & \text{in } \Omega, \medskip\\ 
\rho(x)\,|\nabla u|^{p-2}(\nabla u \cdot \nu) &=&\mu b(x')|u|^{p-2}u+ g(x',u), & \text{on } \partial\Omega.
\end{array}
\right.
\end{equation}
where $\nu=\nu(x')$ is the outward normal vector at $x'\in\partial\Omega$. The spatially varying coefficient $\rho: \Omega \to \mathbb{R}$, the non-negative functions $a,\,b$ and the continuous nonlinearities $f:\Omega \times \mathbb{R} \to \mathbb{R}$, $g:\partial\Omega \times \mathbb{R} \to \mathbb{R}$ verify some mild growth conditions which enable us to use critical point theory. 

More specifically, we would like to look for critical points of the energy functional associated with the equation. Formally, it has the form,
\[
I(u)
 := \frac{1}{p}\int_\Omega \rho(x)\,|\nabla u|^p\,dx - \frac{\lambda}{p} \int_\Omega a(x) |u|^p\,dx  - \frac{\mu}{p}\int_{\partial\Omega} b(x')\,|u|^p\,d\sigma
   - J(u)
\]
with
$$
J(u) := \int_\Omega F(x,u)\,dx
   + \int_{\partial\Omega} G(x',u)\,d\sigma
$$
where $F(x,t) := \int_0^t f(x,\tau)d\tau$,  $G(x',t):=\int_0^t g(x',\tau)d\tau$ are the primitives of $f(x,\cdot)$, $g(x',\cdot)$, respectively, and $d\sigma$ stands for the surface element on $\partial\Omega$.

The idea now is guarantee that all the above integrals are well defined and the functional $I$ has good linking properties.

\begin{example}\label{Example1} 
Suppose $W$ satisfies the assumptions of Theorem \ref{trace-positive} and  $\rho$ is such that
\[
\rho(x) \geq C_0\,\frac{[W(x)]^p}{[W_{x_N}(x)]^{p-1}}, 
\qquad \forall\, x \in \Omega.
\]
As a first approach, we assume the Sobolev case $1<p<N$ and that there exist $C_1>0$, $0 \leq s \leq p$ and $q \in [p,p_*]$ such that
$$
|f(x,t)| \leq C_1 [W_{x_N}(x)]^{s/p}|t|^{q(s)}, \qquad \forall \, (x,t) \in \Omega \times \mathbb{R},
$$
and
$$
|g(x',t)| \leq C_1 W(x',\psi(x'))|t|^q, \qquad \forall  \,(x',t) \in\partial\Omega \times \mathbb{R}.
$$
Under these condition, we may use standard calculations to show that $J \in C^1(\mathcal{D}_W^+,\mathbb{R})$.

Concerning the $p$-quadratic part of the functional $I$, we assume that
$$
0 \leq a(x) \leq a_0 W_{x_N}(x), \quad 0 \leq b(x')\leq b_0W(x'),\qquad \forall\, \,x\in \Omega,\, x'\in \partial\Omega.
$$
From our embedding results, we obtain $C_2>0$ such that
$$
\int_\Omega a(x) |u|^p dx \leq C_2\|u\|_{\mathcal{D}_W^+}^p, \quad \int_{\partial\Omega} b(x')|u|^p d\sigma \leq C_2\|u\|_{\mathcal{D}_W^+}^p,
$$
for any $u \in \mathcal{D}_W^+$.

We now define 
$$
\|u\|_E:= \left(  \int_\Omega \rho(x)\,|\nabla u|^p\,dx -  \mu_1 \int_\Omega a(x) |u|^p\,dx  - \mu_2 \int_{\partial\Omega} b(x')\,|u|^p\,d\sigma \right)^{1/p}
$$
and use all the above inequalities to write
$$
\|u\|_E^p \geq \Big( C_0 - (\lambda + \mu)C_2\Big) \|u\|_{\mathcal{D}_W^+}^p,\qquad \forall\, u \in \mathcal{D}_W^+.
$$
So, if the parameters $\lambda,\,\mu$ are small, our functional behaves like
$
I(u) \sim  \|u\|_E^p - J(u)$, for any $u \in E$, where $E$ is the Banach space $\mathcal{D}_W^+$ endowed with the (equivalent) norm $\|\cdot\|$.

We can handle the borderline case $p=N$ in the same way, just replacing the growth condition on $f$ and $g$ by
$$
|f(x,t)| \leq C_1 W_{x_N}(x)|t|^{q_1},\quad |g(x',t)|  \leq C_1W(x',\psi(x'))|t|^{q_2},
$$
for exponents $q_1,\,q_2 \in [N,+\infty)$.
\end{example}

\begin{example}    
Another application of our results is the study of  nonlinear elliptic problems with Robin boundary conditions of the form
\begin{equation} \label{mineiro1}
\left\{
\begin{array}{rcll}
-\operatorname{div}\!\big(\rho(x)\,|\nabla u|^{p-2}\nabla u\big) &=& f(x,u), & \text{in } \Omega, \medskip\\
\rho(x')\,|\nabla u|^{p-2}(\nabla u\!\cdot\!\nu) + b(x')|u|^{p-2}u &=& g(x',u), & \text{on } \partial\Omega.
\end{array}
\right.
\end{equation}
The main difference here is that the non-negative potential $b$ appears on the left-hand side of the boundary condition. 

In this case, considering the weighted Sobolev space $E$ defined by the completion of $C_0^\infty(\mathbb{R}^N)$ with repecto to the norm 
$$
\|u\|_E:=\left(\dint \frac{W^p}{[W_{x_N}]^{p-1}} |\nabla u|^pdx+\dint W_{x_N}|u|^pdx\right)^{1/p},
$$
we see that in the space $E$ the Friedrich norm  
$$
    \|u\|_F:=\left(\dint \frac{W^p}{[W_{x_N}]^{p-1}}|\nabla u|^pdx+\int_{\mathbb{R}^{N-1}} W(x',\psi(x'))|u(x',\psi(x')|^pdx'\right)^{1/p},
$$
is equivalent to $\|\cdot\|_E$. Thus, we can assume similar conditions as in the Example~\ref{Example1} in order to address results of existence and Liouville type results for this class of problems, see for instance \cite{zbMATH05077864}.
\end{example}

\begin{example}
Of course, we can also consider  the case where $W$ is decreasing. More precisely, by combining the Hardy-type inequality \eqref{Hardy-Geral-Grafico-II} with the Sobolev embedding established for the decreasing case, we can treat problem \eqref{mineiro1}
with the weighted function $\rho$ satisfying
	$$
	\rho(x)\geq  C_0\,\frac{[W(x_N)]^p}{[-W_{x_N}(x_N)]^{p-1}}, \qquad \forall\, x \in \Omega.
	$$
    We point out that in this case we are not able to consider the Neumann boundary condition, that is, $b \equiv 0$. Nevertheless, it is possible to take $\rho \equiv 1$ in this setting (see, for instance, Example~\ref{Example-Decreassing}).
\end{example}

\subsection{Further Research Directions}

\begin{itemize}
\item Is the constant $$c(\gamma,p)=\left(\frac{\gamma -p+1}{p}\right)^{p}$$ in the inequality \eqref{Hardy-singular} optimal and/or achieved? \\

\item As already mentioned, using inequality \eqref{Hardy-singular} together with a scaling argument, 
it is natural to expect that the following weighted Hardy--Sobolev--Maz'ya type inequality holds:
\begin{align*}
C_0
\left(
    \int_{\mathbb{R}^N_+} x_N^{\frac{N\gamma}{N-p}} |u|^{p^*} \, dx
\right)^{\!p/p^*}
 \leq
\int_{\mathbb{R}^N_+} x_N^{\gamma} |\nabla u|^p \, dx -
\frac{1}{(\gamma - p + 1)^{p}}
\int_{\mathbb{R}^N_+} \frac{|u|^p}{x_N^{\,p-\gamma}} \, dx,
\end{align*}
for all $u \in C_0^\infty(\mathbb{R}^N_+)$, $\gamma > p-1$ and some constant $C_0 = C_0(N,p,\gamma)$. Is this really true?\\

\item Is it possible to prove Theorem~\ref{Sobolev p=N} when the weight $W$ depends also of $x'$ with no additional condition, see Remark~\ref{chuva-sem-x'}?\\

\item The inequality \eqref{Hardy-Type II} in Theorem~\ref{Hardy-Type II} has  the form 
\begin{equation} \label{AB}
    \begin{aligned}
        \int_{\Omega} [-W_{x_N}]|u|^pdx & \leq   A \int_{\Omega} \frac{W^p}
         {\left[-W_{x_N}\right]^{p-1}}|\nabla u|^pdx \\ & +   B	\int_{\mathbb{R}^{N-1}} W(x',\psi(x'))|u(x',\psi(x'))|^pdx'
    \end{aligned}    
\end{equation}
 where $A$ and $B$ are positive constants.  As is well known, see for instance \cite{Druet-Hebey},  the first best constant associated to \eqref{AB} is defined by
  $$
  A_{\mathrm{opt}}:=\inf \Bigl\{A\in \mathbb{R}: \mbox{there exists }  B\in\mathbb{R}  \mbox{ such that } \eqref{AB}\mbox{ is valid} \Bigr\}
  $$
whose corresponding inequality is given by 
\begin{equation}\label{Aoptimal}
        \begin{aligned} \int_{\Omega} [-W_{x_N}]|u|^pdx & \leq A_{\mathrm{opt}}\int_{\Omega} \frac{W^p}
         {\left[-W_{x_N}\right]^{p-1}}|\nabla u|^pdx \\ & +  B	\int_{\mathbb{R}^{N-1}} 
         W(x',\psi(x'))|u(x',\psi(x'))|^pdx'.
         \end{aligned}
  \end{equation}
Now, it is natural to investigate the \emph{second-best constant} associated with \eqref{Aoptimal}, defined by 
\[
B_{\mathrm{opt}}
   := \inf\Bigl\{ B \in \mathbb{R} : \eqref{Aoptimal} \text{ holds with this value of } B \Bigr\}.
\]
Thus, one may investigate the precise value of optimal constants $A_{\mathrm{opt}}$ and $B_{\mathrm{opt}}$ in the above inequality.\\

\item If we assume that $W_1,W_2$ are positive weights, can we prove an inequality as below
$$
    \int_{\Omega} W_1 |u|^p + C_1\int_{\partial\Omega} W_2 |u|^p\leq C_2\int_{\Omega}  |\nabla u|^pdx, \quad \forall u\in C_0^\infty(\mathbb{R}^N),
$$
for some constants $C_1, C_2>0?$
Can we prove the same result above when the co-dimension is greater than one?\\

\item We observe that the monotonicity of \( W \) plays a decisive role in determining the position of the boundary term in our inequalities. A natural question arises: is this monotonicity assumption necessary in order to obtain results of this type?\\

\item In view of the Sobolev embedding in the borderline case $p = N$, a natural question is whether a Trudinger--Moser type inequality holds in the spaces $\mathcal{D}_W^{+}$ and $\mathcal{D}_W^{-}$.

\end{itemize}
\bigskip

\begin{flushleft}
	{\bf Funding:}  
	J. M. do \'O acknowledges partial support from CNPq through grants 312340/2021-4, 
    409764/2023-0, 443594/2023-6, CAPES MATH AMSUD grant 88887.878894/2023-00. 
    M. Furtado was partially supported by CNPq/Brazil and FAPDF/Brazil.
    E. Medeiros acknowledges partial support from CNPq through grant 310885/2023-0 
    and Para\'iba State Research Foundation (FAPESQ), grant no 3034/2021. 
    J. Ratzkin is partially 
    supported by the Deutsche Forschungsgemeinschaft (DFG) through grant \# 561401741.\\
	{\bf Ethical Approval:}  Not applicable.\\
	{\bf Competing interests:}  Not applicable. \\
	{\bf Authors' contributions:}    All authors contributed to the study conception 
    and design. All authors performed material preparation, data collection, and analysis. 
    The authors read and approved the final manuscript.\\
	{\bf Availability of data and material:}  Not applicable.\\
	{\bf Ethical Approval:}  All data generated or analyzed during this study are 
    included in this article.\\
	{\bf Consent to participate:}  All authors consent to participate in this work.\\
	{\bf Conflict of interest:} The authors declare that they have no conflict of interest. \\
	{\bf Consent for publication:}  All authors consent for publication. \\
\end{flushleft}

\bibliographystyle{abbrv}
\bibliography{bibliography}

\end{document}